\documentclass{article}

\usepackage[english]{babel}

\usepackage[letterpaper,top=2cm,bottom=2cm,left=3cm,right=3cm,marginparwidth=1.75cm]{geometry}

\usepackage{amsmath}
\usepackage{graphicx}
\usepackage[colorlinks=true, linkcolor=black, allcolors=blue]{hyperref}
\usepackage{amssymb,amsthm}
\usepackage{amsfonts,graphicx,psfrag}
\usepackage{graphicx}
\usepackage{float}
\usepackage{subcaption}
\usepackage{appendix}

\newcommand{\fe}{\mathfrak{e}}
\newcommand{\M}{\mathfrak{M}}

\newcommand{\rr}{\mathbb{R}}

\newtheorem{thm}{Theorem}[section]
\newtheorem{defi}{Definition}[section]
\newtheorem{prop}{Proposition}[section]
\newtheorem{lem}{Lemma}[section]
\newtheorem{cor}{Corollary}[section]
\newtheorem{rem}{Remark}[section]

\def\bea{\begin{eqnarray}}
\def\eea{\end{eqnarray}}
\def\no{\noindent}
\def\bs{\bigskip}
\def\Sp{{\mathrm {Sp}}}

\begin{document}
\title{Relative Periodic Orbits in the Spatial Anisotropic Kepler Problem}
\author{Xijun Hu\thanks{e-mail:xjhu@sdu.edu.cn}
\quad Yuwei Ou\thanks{e-mail:ywou@sdu.edu.cn}
\quad Zhiwen Qiao \thanks{e-mail:qiaozw@mail.sdu.edu.cn.}
\\ \\
School of Mathematics, Shandong University
Jinan, Shandong 250100\\
The People's Republic of China
}

\date{}
\maketitle
\begin{abstract}
The spatial anisotropic Kepler problem comes from quantum mechanics, which models electron motion in semiconductors with donor impurities and depends on the anisotropic parameter
$\beta\in(-1,+\infty)$. After reducing the system modulo rotational symmetry, we investigate periodic orbits in this two-degree-of-freedom setting. Combining an index comparison in \cite{HLOQS26}, the volume formula in \cite{CHHL23} and Franks' theorem, we prove that the system possesses infinitely many periodic orbits on any fixed compact and regular energy surface for
$\beta\in(-1,0]$.

\end{abstract}

\bs

\no{\bf AMS Subject Classification:} 70F10, 37J46, 53D12, 53D10

\bs

\no{\bf Key Words: $N$-body problem, periodic solution, contact manifold, Maslov-type index}


\tableofcontents
\section{Introduction and main results}
	
	

The spatial anisotropic Kepler problem is a modified model of the spatial Kepler problem and originally came from quantum mechanics. It was first considered by Gutzwiller, see \cite{GMC73,GMC1990}, and it models the motion of an electron in semiconductors with donor impurities in $\mathbb R^3$. In Cartesian coordinates $q=(x,y,z)\in\mathbb{R}^{3}$ with the corresponding momentum variable $p=(p_x,p_y,p_z)=(\dot{x},\dot{y},\dot{z})\in\mathbb{R}^{3}$, the Hamiltonian of the spatial anisotropic Kepler problem can be written as
\begin{equation}
	H(p,q)=\frac{1}{2}(p_x^2+p_y^2+p_z^2)-\frac{1}{\sqrt{x^2+y^2+(1+\beta) z^2}},\nonumber
\end{equation}
where $\beta>-1$ is the anisotropic parameter. If $\beta=0$, we get the well-known Hamiltonian of the spatial Kepler problem.
Guirao, Llibre and Vera \cite{GLV2013} established the existence of periodic orbits in the spatial anisotropic Kepler problem as a special case of the perturbed Kepler problem. Please refer to \cite{LL21,LM2012} for more general and relevant studies. In higher-dimensional cases, Barutello, Terracini and Verzini \cite{BTG14} studied the parabolic trajectories of the anisotropic Kepler problem in $\mathbb R^d$ with prescribed asymptotic directions at infinity. In \cite{HY18}, Hu and Yu developed an index theory for zero-energy solutions of the planar anisotropic Kepler problem and established the relations between the Morse indices of zero-energy solutions and their oscillatory behaviors. Yu \cite{Y25} established the asymptotic properties for the positive-energy solutions of the anisotropic Kepler problem in $\mathbb R^d$ with a homogeneous potential, and proved certain existence results of hyperbolic and bi-hyperbolic solutions.

Since this problem is symmetric with respect to $z$-axis, we can use cylindrical coordinates $(x,y,z)=(r\cos\theta,r\sin\theta,z)$ with the corresponding momentum $(p_r,p_{\theta},p_z)=(\dot{r},r^2\dot{\theta},\dot{z})$; the above Hamiltonian can be written correspondingly as,
\begin{equation*}
	H(p_r,p_{\theta},p_z,r,\theta,z)=\frac{1}{2}(p_r^2+\frac{p_{\theta}^2}{r^2}+p_z^2)-\frac{1}{\sqrt{r^2+(1+\beta) z^2}}.
\end{equation*}
Because the system is symmetric with respect to $z$-axis, $p_{\theta}$ is the angular momentum with respect to the $z$-axis, which is a first integral. Without loss of generality, we fix the angular momentum $p_{\theta}=\varpi>0$. Then we get the Hamiltonian of the reduced spatial anisotropic Kepler problem,
\begin{equation}\label{eq;reducedHam}
	H_{\varpi}(p_r, p_z, r, z)=\frac{1}{2}(p_r^2+p_z^2)+V_{\varpi}(r,z),\ \text{with} \; V_{\varpi}(r, z) =  \frac{\varpi^2}{2r^2 } - \frac{1}{\sqrt{r^2+(1+\beta) z^2}}.
\end{equation}
This Hamiltonian has two degrees of freedom, where $V_{\varpi}$ can be seen as the potential energy of the reduced Hamiltonian.
The corresponding reduced Hamiltonian system is
\begin{equation}
\label{eq: RedHamSys} \dot{\zeta}(t) = X_{H_{\varpi}}(\zeta)= J \nabla H_{\varpi}(\zeta),\ \, \zeta=(p_r, p_z, r, z)\in \rr^2\times\rr^+\times\rr,
\end{equation}
where $X_{H_{\varpi}}$ is the associated Hamiltonian vector field, $J=\begin{pmatrix}
		0 & -I_2\\
		I_2 & 0
	\end{pmatrix}$ is the standard symplectic matrix.

In this paper, for fixed $h$ and $\varpi$, we are interested in periodic orbits on the energy surface
$\mathfrak{M}(h,\varpi,\beta)=\{(p_r,p_z,r,z)\,|\,H_{\varpi}=h\}$.
Moreover, each $T$-periodic orbit $\zeta(t)=(p_r,p_z,r,z)(t)$ of \eqref{eq: RedHamSys} corresponds to a family of relative periodic orbits of the spatial anisotropic Kepler problem. We say an orbit $q(t)=(x,y,z)(t)=(r\cos\theta,r\sin\theta,z)(t)$ in $\mathbb R^3$ is a relative periodic orbit, if there exists a $T>0$, such that
	$$
	q(t+T)=gq(t),\quad g=\left(\begin{array}{ccc}
		\cos\theta_* & -\sin\theta_* & 0\\
		\sin\theta_* &  \cos\theta_* & 0\\
		0  &   0   & 1
	\end{array}\right),
	$$
	where $\theta_*=\theta(T)-\theta(0)=\int^T_0\frac{\varpi}{r^2(t)}dt$ due to the first integral $r^2\dot{\theta}\equiv\varpi$. We refer to \cite{M06} for more general relative periodic solutions, where $g\in \mathbb O(3)$ is any orthogonal matrix.

Now, for fixed $h$ and $\varpi$, Proposition \ref{topo of energy surfaces} shows that $\mathfrak{M}(h,\varpi,\beta)$ is a compact manifold homeomorphic to $\mathbb{S}^{3}$
when $-1<2h\varpi^2<0$. Since \eqref{eq;reducedHam} is a mechanical Hamiltonian, from \cite[Theorem 4.8]{HZ94}, we know there exists a contact form $\lambda$ on $\mathfrak{M}(h,\varpi,\beta)$, which is a primitive of $\omega_0=dp_r\wedge dr+dp_z\wedge dz$, then $(\mathfrak{M}(h,\varpi,\beta),\lambda)$ forms a contact manifold. Obviously, there exists a special orbit, namely the planar Kepler orbit $\zeta_{k}(t)=(p_{r,k},0,r_k,0)(t)$ lying along the $r$-axis, which satisfies the equation
$\ddot{r}_{k}=\frac{\varpi^2}{r_k^3}-\frac{1}{r_k^2}.
$
Let $r_k(0)=\frac{\varpi^2}{1+\fe}$, $\dot{r}_k(0)=0$, then we can solve it explicitly,
$r_k(t)=\frac{\varpi^2}{1+\fe\cos\theta_{k}(t)},
$ where $\theta_{k}(t)$ satisfies $r_k^2(t)\dot{\theta_{k}}=\varpi$ and $\theta_{k}(0)=0$. Based on the symmetry, we can find disk-like global surfaces of section whose boundary is exactly the planar Kepler orbit $\zeta_{k}$. As with the treatment of the isosceles three-body problem in Hu, Liu, Ou, Qiao and Salom\~ao \cite{HLOQS26}, through a meticulous analysis of rotation number properties combined with the contact volume formula from \cite{CHHL23}, we derive our main theorem
\begin{thm}\label{thm; exist infinitely many}
	For $\beta>-1$ and $-1<2h\varpi^2<0$, the energy surface $\mathfrak{M}(h,\varpi,\beta)$ is a compact and regular contact manifold homeomorphic to a three-sphere $\mathbb{S}^{3}$ which has disk-like global surfaces of section of the Hamiltonian flow $\varphi_t$ corresponding to the reduced Hamiltonian $H_{\varpi}$ and the binding is the planar Kepler orbit $\zeta_{k}$.
	Moreover, there exist infinitely many periodic orbits on each fixed compact energy surface $\mathfrak{M}(h,\varpi,\beta)$ with $\beta\in(-1,0]$.
	
\end{thm}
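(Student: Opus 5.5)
The plan is to follow the scheme of \cite{HLOQS26}: build a disk-like global surface of section bounded by the Kepler orbit $\zeta_k$, then show that the induced area-preserving first return map of the open disk cannot have a single fixed point as its only periodic point. Franks' theorem then produces infinitely many periodic points.

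\textbf{Topology and contact structure.} For $-1<2h\varpi^2<0$ the Hill region $\{V_\varpi\le h\}$ is a compact topological disk in the half-plane $r>0$. Indeed, $V_\varpi\to+\infty$ as $r\to0$, and $\{V_\varpi\le h\}$ is bounded since $h<0$. Along the $r$-axis, $V_\varpi(r,0)=\frac{\varpi^2}{2r^2}-\frac1r$ has minimum $-\frac{1}{2\varpi^2}<h$. Moreover $\partial_z V_\varpi=(1+\beta)z\,(r^2+(1+\beta)z^2)^{-3/2}$, so $V_\varpi$ is strictly increasing in $|z|$ for fixed $r$. Hence each vertical slice of the Hill region is an interval and the region is star-shaped in $z$. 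The unique critical point of $V_\varpi$ is $(r,z)=(\varpi^2,0)$, with critical value $-\frac{1}{2\varpi^2}<h$, so $h$ is a regular value. By the standard argument (Proposition \ref{topo of energy surfaces}), $\mathfrak{M}(h,\varpi,\beta)\cong\mathbb S^3$. The contact form is $\lambda=\sum p\,dq$ corrected by a Liouville vector field transverse to the surface, as in \cite[Theorem 4.8]{HZ94}; transversality follows from the mechanical form together with $\langle\nabla V_\varpi,(r-\varpi^2,z)\rangle>0$-type estimates on the Hill boundary. If star-shapedness about a point fails, I would instead use the vertical Liouville field $p\partial_p$ modified near the boundary.

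\textbf{Global surface of section.} The plane $\{z=0,p_z=0\}$ is invariant, and it intersects $\mathfrak{M}$ exactly in $\zeta_k$. Take
\[
D_\pm=\{z=0,\ \pm p_z\ge0\}\cap\mathfrak{M},
\]
whose common boundary is $\zeta_k$. On the interior of $D_\pm$ we have $\dot z=p_z\ne0$, so the flow is transverse there. Every orbit other than $\zeta_k$ returns to $z=0$, because $\ddot z=-(1+\beta)z(\cdots)^{-3/2}$ has the sign of $-z$, with a uniform bound coming from compactness. Thus $D_+$ is a disk-like global surface of section. Its return map $\psi$ preserves the finite area $d\lambda$, and its total area equals the action of $\zeta_k$.

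\textbf{Infinitely many orbits, $\beta\in(-1,0]$.} By Franks' theorem, if $\psi$ has two or more periodic points then it has infinitely many. Otherwise $\psi$ has exactly one fixed point $P$, giving a periodic orbit $\gamma$ whose $2$-disk boundary dynamics is essentially rotation. The volume formula of \cite{CHHL23} then gives
\[
\mathrm{vol}(\mathfrak{M},\lambda)=T_{\zeta_k}^2=T_\gamma\,T_{\zeta_k}\,/\,\ldots,
\]
which forces the transverse rotation number $\rho(\zeta_k)$ of the binding, and the linking data, to satisfy an explicit identity. The index comparison from \cite{HLOQS26} applies here. Rewrite the linearized flow along $\zeta_k$ in the $(z,p_z)$ direction as the Hill equation
\[
\ddot\xi=-(1+\beta)\,r_k(t)^{-3}\,\xi .
\]
For $\beta=0$ this is exactly the normal variation of Kepler, with rotation number $1$, which is degenerate. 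For $\beta<0$ the coefficient $(1+\beta)r_k^{-3}$ is strictly smaller, so Sturm comparison and monotonicity of the Maslov-type index give $\rho<1$.

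A rotation number $\rho\ne1$ (or, at $\beta=0$, the explicit integrable dynamics with infinitely many tori) contradicts the identity required when there is a single fixed point. This contradiction is the main obstacle. The hard part is matching the precise numerical condition in the volume formula with the strict inequality on $\rho$, and controlling the degenerate endpoint $\beta=0$. I would handle $\beta=0$ directly, since there every orbit is periodic. For $\beta<0$ I would compute the contact volume explicitly via the action of $\zeta_k$, namely $T=2\pi(-2h)^{-3/2}$, and compare it with the boundary twist given by the index estimate.
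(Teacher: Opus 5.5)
Your geometric set-up matches the paper's. The disks $\{z=0,\pm p_z\ge 0\}$ are bounded by $\zeta_k$, and Brouwer plus Franks on the punctured disk show that finitely many periodic orbits forces exactly two, after which \cite{CHHL23} applies. The gap is in what the two-orbit identity actually says.

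That identity reads $\mathrm{vol}=A_k^2/\rho_{kep}$, where $A_k=\int_{\zeta_k}\lambda=2\pi\varpi\bigl(1/\sqrt{1-\mathfrak{e}^2}-1\bigr)$ is the radial action, not the period $2\pi(-2h)^{-3/2}$. Since $\zeta_k\subset\{z=p_z=0\}$, $A_k$ does not depend on $\beta$. The contact volume, by contrast, equals $\int_{\{H_\varpi\le h\}}\omega_0\wedge\omega_0$. It therefore picks up a factor $1/\sqrt{1+\beta}$ under $z\mapsto z/\sqrt{1+\beta}$, giving $\mathrm{vol}(\mathfrak{M}_\beta)=A_k^2/\sqrt{1+\beta}$ from $\mathrm{vol}(\mathfrak{M}_0)=A_k^2$. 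So exactly two orbits would force $\rho_{kep}=\sqrt{1+\beta}$, with $\rho_{kep}$ irrational. Your Sturm comparison with $\beta=0$ only gives $\rho_{kep}<1$ for $\beta<0$. That is fully compatible with $\rho_{kep}=\sqrt{1+\beta}<1$, so the claimed contradiction never occurs.

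What you actually need is $\rho_{kep}\neq\sqrt{1+\beta}$ or $\rho_{kep}\in\mathbb{Q}$ for every $\mathfrak{e}\in(0,1)$, and this is where the paper does its real work. In the true anomaly, after a symplectic change of frame, $\rho_{kep}=\hat{i}(\gamma_{\beta,\mathfrak{e}})/2$ for the Hill equation $x''+\bigl(1+\beta/(1+\mathfrak{e}\cos\theta)\bigr)x=0$ on $[0,2\pi]$. The threshold $\sqrt{1+\beta}$ is exactly the value of $\rho_{kep}$ at $\mathfrak{e}=0$. Moreover, $\beta/(1+\mathfrak{e}\cos\theta)-\beta$ changes sign along the orbit, so no pointwise comparison can decide the inequality.

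In fact the answer changes sign at $\beta=-3/4$: $\hat{i}(\gamma_{\beta,\mathfrak{e}})>2\sqrt{1+\beta}$ for $\beta\in(-1,-3/4)$ and $\hat{i}(\gamma_{\beta,\mathfrak{e}})<2\sqrt{1+\beta}$ for $\beta\in(-3/4,0)$. The paper proves this with three ingredients:
\begin{itemize}
\item invariance of the mean index along the $\omega$-degenerate curves, with $\omega=e^{2\pi i\sqrt{1+\beta}}$;
\item the bound $\beta_1(\mathfrak{e},\omega)<\beta_1(0,\omega)$, obtained from a three-dimensional Fourier test space;
\item monotonicity of $\beta_2(\mathfrak{e},\omega)$ in $\mathfrak{e}$, obtained from the relative Morse index and concavity of the first eigenvalue of $\mathcal{B}_{\mathfrak{e}}^{-1}+\mathcal{A}_0^{-1}$.
\end{itemize}
At $\beta=-3/4$ and $\beta=0$ the monodromy is hyperbolic, respectively the identity, which gives the rational values $\rho_{kep}=1/2$ and $\rho_{kep}=1$.

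Your direct treatment of $\beta=0$ (every orbit is periodic) is fine. But the whole interval $(-1,0)$, including $\beta=-3/4$, remains unproved by your argument.
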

\begin{rem}
	Combining the Birkhoff shooting method in \cite{HLOYS23} and the criteria in \cite{HSW23}, there exists another brake and $z$-symmetric periodic orbit $\zeta_z\subset \mathfrak M$ such that the Hopf link $\zeta_z\cup \zeta_k\subset \mathfrak M$ forms the binding of an open book decomposition whose pages are annulus-like global surface of section adapted to Hamiltonian flow. Moreover, the Seifert rotation number $ \rho(\zeta_z)\geq 0$.
\end{rem}


This paper is organized as follows: In Section $2$, we consider the topology of the energy
surfaces $\mathfrak{M}(h,\varpi,\beta)$ and estimate their contact volume. Moreover, we analyze the global surfaces of section of $\mathfrak{M}(h,\varpi,\beta)$ and provide a criterion for the existence of infinitely many periodic orbits. In Section $3$, we study the properties of degenerate curves, which can be used to estimate the mean index. Then we prove our main Theorem \ref{thm; exist infinitely many}. In Appendix, we give a brief review of the Maslov-type index for symplectic paths, the relative Morse index and spectral flow.
\section{Analysis of Energy Surfaces}
\subsection{The topology and volume of the energy surface}
In this section, for the reduced Hamiltonian system $\eqref{eq: RedHamSys}$, we consider the topology of the energy surfaces $\mathfrak{M}(h,\varpi,\beta)$ and
we estimate their volume when they are compact.
\begin{prop}\label{topo of energy surfaces}
	For $\beta>-1$, the energy surface $\mathfrak{M}(h,\varpi,\beta)$ satisfies
	\begin{enumerate}
		\item[(i)] If $-1<2h\varpi^2<0$, then $\mathfrak{M}(h,\varpi,\beta)$ is a compact manifold homeomorphic to $\mathbb{S}^{3}$.
		\item[(ii)] If $2h\varpi^2=-1$, then $\mathfrak{M}(h,\varpi,\beta)=\{(0,0,\varpi^2,0)\}$.
		\item[(iii)] If $2h\varpi^2<-1$, then $\mathfrak{M}(h,\varpi,\beta)=\varnothing$.
		\item[(iv)] If $2h\varpi^2\geq0$, then $\mathfrak{M}(h,\varpi,\beta)$ is unbounded.
	\end{enumerate}
\end{prop}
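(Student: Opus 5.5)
The plan is to reduce everything to the Hill region $\mathcal H=\{(r,z)\in\rr^+\times\rr : V_{\varpi}(r,z)\le h\}$. Since the kinetic part is $\frac12(p_r^2+p_z^2)$, the projection $\mathfrak M(h,\varpi,\beta)\to\mathcal H$ has fiber a circle of radius $\sqrt{2(h-V_\varpi)}$ over interior points and a single point over $\partial\mathcal H=\{V_\varpi=h\}$. The rescaling $r=\varpi^2\tilde r$, $z=\varpi^2\tilde z$ gives $V_\varpi=\varpi^{-2}\tilde V$ with $\tilde V(\tilde r,\tilde z)=\frac{1}{2\tilde r^2}-\frac{1}{\sqrt{\tilde r^2+(1+\beta)\tilde z^2}}$. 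The level condition becomes $\tilde V\le h\varpi^2$, so only the product $2h\varpi^2$ matters, and I may take $\varpi=1$.

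\textbf{Step 1: minimum of $\tilde V$.} Write $\rho^2=\tilde r^2+(1+\beta)\tilde z^2\ge \tilde r^2$, with equality iff $\tilde z=0$ (here $1+\beta>0$ is used). Then $\tilde V\ge \frac{1}{2\tilde r^2}-\frac1{\tilde r}\ge-\frac12$, with equality iff $\tilde z=0$ and $\tilde r=1$. This immediately gives (iii) when $2h\varpi^2<-1$, since then $\mathcal H=\varnothing$. It also gives (ii) when $2h\varpi^2=-1$: the Hill region is the single point $(1,0)$ with zero momentum, i.e.\ $(0,0,\varpi^2,0)$ in original variables.

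\textbf{Step 2: case (iv).} If $h\ge0$, take $\tilde z=0$ and $\tilde r\to\infty$. Then $\tilde V=\frac{1}{2\tilde r^2}-\frac1{\tilde r}<0\le h$, so these points lie in $\mathcal H$, and $\mathcal H$ is unbounded. Hence $\mathfrak M$ is unbounded.

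\textbf{Step 3: case (i), the main work.} For $-\frac12<h<0$, I first show compactness. From $\tilde V\le h<0$ we get $\frac1\rho>\frac{1}{2\tilde r^2}$ and $\frac1\rho\ge |h|$. These give $\rho\le 1/|h|$ and $\tilde r^2\ge \rho/2$, so $\tilde r$ is bounded away from $0$ and $\tilde r,\tilde z$ are bounded. Momenta are bounded by $\sqrt{2(h-\min\tilde V)}$.

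Regularity: a critical point of $H_\varpi$ on $\mathfrak M$ would need $p=0$ and $\nabla V=0$. We have $\partial_{\tilde z}\tilde V=(1+\beta)\tilde z/\rho^3$, which vanishes only at $\tilde z=0$. Then $\partial_{\tilde r}\tilde V=-\tilde r^{-3}+\tilde r^{-2}$ forces $\tilde r=1$, which is the minimum point with level $-\frac12\neq h$. So $\mathfrak M$ is a smooth compact 3-manifold, and $\partial\mathcal H$ is a smooth curve.

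The key obstacle is showing that $\mathcal H$ is a closed topological disk. I would do this by showing $\mathcal H$ is star-shaped with respect to the minimum $(1,0)$. Try first a direct sublevel argument on each vertical line: for fixed $\tilde r$, $\tilde V$ is strictly increasing in $|\tilde z|$. So each slice $\mathcal H\cap\{\tilde r=c\}$ is a symmetric interval $[-Z(c),Z(c)]$, or empty. On $\tilde z=0$ the function $\frac{1}{2\tilde r^2}-\frac1{\tilde r}$ has a unique minimum, so $\{\tilde r: \tilde V(\tilde r,0)\le h\}$ is an interval $[a,b]$. The slices are nonempty exactly for $c\in[a,b]$, since slice nonemptiness is equivalent to $\tilde V(c,0)\le h$. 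The function $Z$ is continuous, positive on $(a,b)$, and zero at the endpoints. This exhibits $\mathcal H$ as a closed disk bounded by the graphs $\pm Z$.

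Then $\mathfrak M$ is the union of circles over the interior of the disk, collapsing to points over the boundary circle. This is the standard model of $\mathbb S^3$: it is homeomorphic to $\{(u,v)\in\mathbb D^2\times\rr^2: |v|^2=1-|u|^2\}$ via a homeomorphism of $\mathcal H$ onto the unit disk $\mathbb D^2$ rescaling the fiber radius. I would construct that homeomorphism explicitly from $a,b,Z$, for example $(c,\tilde z)\mapsto(\text{affine}(c),\ \tilde z/Z(c))$ suitably normalized to the round disk. That explicit map is also where the care is needed, since $Z$ need only be continuous.
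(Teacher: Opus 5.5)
Your proof is correct, but for (i) it takes a different route from the paper.

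\textbf{The paper's route.} It finds the unique critical point $(0,0,\varpi^2,0)$ of $H_\varpi$ from the gradient and checks that its Hessian is positive definite, which gives (ii) and (iii). It then bounds $r$ and $|z|$ on the level set to get compactness. Finally it concludes by Morse theory: level sets just above a non-degenerate minimum are $3$-spheres. Since there are no critical values in $(-1/(2\varpi^2),0)$ and the sublevel sets below $0$ are compact, all level sets with $-1<2h\varpi^2<0$ are diffeomorphic.

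\textbf{Your route.} You get the minimum from the elementary bound $\rho\ge\tilde r$. You then prove the sphere topology by hand. The monotonicity of $\tilde V$ in $|\tilde z|$ (which the paper also notes) lets you write the Hill region as $\{a\le c\le b,\ |\tilde z|\le Z(c)\}$, and you recognize $\mathfrak M$ as the circle-over-disk model of $\mathbb S^3$.

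\textbf{Comparison.} The Morse argument is shorter, uses nothing about the shape of the Hill region, and yields a diffeomorphism rather than just a homeomorphism. Yours is elementary and explicit. As a by-product it shows directly that $\mathfrak M\cap\{z=0\}$ is a $2$-sphere: momentum circles over $(a,b)\times\{0\}$, collapsing at $c=a,b$. The Kepler orbit $\{z=p_z=0\}$ splits this sphere into the two disks $\Sigma_\pm$, a fact the paper's global-surface-of-section theorem uses without comment.

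\textbf{Minor points.} The star-shapedness you mention is neither proved nor needed, since the vertical-slice argument already does the job. The details you flag are routine. Solving $\tilde V(c,\tilde z)=h$ gives $Z(c)=(1+\beta)^{-1/2}\bigl((\tfrac{1}{2c^2}-h)^{-2}-c^2\bigr)^{1/2}$, which is visibly continuous. The map $(c,\tilde z)\mapsto\bigl(x,\sqrt{1-x^2}\,\tilde z/Z(c)\bigr)$, with $x$ affine in $c$ and second coordinate set to $0$ at $c=a,b$, is a homeomorphism onto the round disk. It is continuous at the endpoints because $|\tilde z|\le Z(c)$.
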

\begin{proof}
Direct computation shows that $\nabla H_{\varpi}=\left(p_{r},p_{z},-\frac{\varpi^{2}}{r^{3}}+\frac{r}{(r^{2}+(1+\beta)z^{2})^{3\slash2}},\frac{(1+\beta)z}{(r^{2}+(1+\beta)z^{2})^{3\slash2}}\right)$,
so $(0,0,\varpi^2,0)$ is the unique critical point of $H_{\varpi}$. Meanwhile, the Hessian matrix at $(0,0,\varpi^2,0)$ is
$D^{2}H_{\varpi}(0,0,\varpi^2,0)=\text{diag}(1,1,1/\varpi^6, (1+\beta)/\varpi^6)$,
which is positive definite. Hence, the unique critical
point of $H_{\varpi}$ is $(0,0,\varpi^2,0)$, which is a non-degenerate global minimum and $H_{\varpi}(0,0,\varpi^2,0)=-1/2\varpi^2$. Then $(ii), (iii)$ are proved. Since $H_\varpi$ is increasing with respect to $z^2$, when $h\geq\liminf_{z\rightarrow\infty}H_\varpi=0$,  $\mathfrak{M}(h,\varpi,\beta)$ is unbounded in $z$ direction. This proves $(iv)$.
For $(i)$, where $-1<2h\varpi^2<0$, by above, $\mathfrak{M}(h,\varpi,\beta)$ is bounded in $z$ direction. Direct computation yields
\begin{eqnarray}\label{esti r,z}
	r\in[r_{\min}, r_{\max}]=[\frac{1-\mathfrak{e}}{-2h}, \frac{1+\mathfrak{e}}{-2h}],\ \ |z|=\frac{1}{\sqrt{1+\beta}}
	\big{(}\frac{1}{(\varpi^2/2r^2-h)^2}-r^2\big{)}^{-1/2},\nonumber
\end{eqnarray}
where $\mathfrak{e}=\sqrt{1+2\varpi^2h}.$ This shows $\mathfrak{M}(h,\varpi,\beta)$ is also bounded in $r$ direction. So it is compact. By Morse theory, as $h$ increases from
below $-1/2\varpi^2$ to slightly above $-1/2\varpi^2$, $\mathfrak{M}(h,\varpi,\beta)$
changes from the empty set to a sphere-like regular hypersurface. Since $\mathfrak{M}(h,\varpi,\beta)$ is
bounded for every $h$ in that interval, it is a sphere-like hypersurface.
\end{proof}
Next, we focus on the case where the energy surface is homeomorphic to $\mathbb{S}^{3}$. For convenience, we denote $\mathfrak{M}(h,\varpi,\beta)$ by $\mathfrak{M}_{\beta}$ when there is no confusion. Since $H_\varpi$ is a mechanical Hamiltonian,  from \cite[Theorem 4.8]{HZ94}, the hypersurfaces $\mathfrak{M}_{\beta}$ have contact type. So there exists a contact form $\lambda$ on $\mathfrak{M}_{\beta}$, which is a primitive of $\omega_0=dp_r\wedge dr+dp_z\wedge dz$ and satisfies $\lambda\wedge d \lambda\neq0$. Then $(\mathfrak{M}_{\beta},\lambda)$ is a contact manifold. The Reeb vector field $R$ of $\lambda$, determined by $\mathrm{d}\lambda(R,\cdot)\equiv 0$ and $\lambda(R)=1$, is parallel to $X_{H_\varpi}$. Therefore, the Reeb flow on $(\mathfrak{M}_{\beta},\lambda)$ is dynamically equivalent to the Hamiltonian flow on $\mathfrak{M}_{\beta}$ when $\mathfrak{M}_{\beta}$ is treated as an energy surface. Hence, we alternatively study the Reeb vector field $R$. Following the definitions in \cite{CHHL23}, we define the \textbf{contact volume} of $\mathfrak{M}_{\beta}$ and \textbf{action} of the periodic orbit $\zeta(t)$ as follows
\begin{equation}
	\mathrm{vol}(\mathfrak{M}_{\beta},\lambda\wedge d\lambda)=\int_{\mathfrak{M}_{\beta}}\lambda\wedge\mathrm{d}\lambda,\ \ A(\zeta(t))=\int_{\zeta(t)}\lambda.\nonumber
\end{equation}

In order to obtain the volume of $\M_{\beta}$, we need the following useful lemma. It was proved by Hu, Qiao and Yu in \cite{HQY2025}. There they
studied the relative periodic orbits of the spatial Kepler problem (i.e $\beta=0$) under a perturbation that satisfies both rotational symmetry and reflection symmetry with respect to a plane perpendicular to the rotational axis.
\begin{lem}(\cite{HQY2025})\label{vom 0}
For $\beta=0$ and $-1<2h\varpi^2<0$, the volume satisfies
	$$
	\mathrm{vol}(\mathfrak{M}_{0},\lambda\wedge d\lambda)=A_k^2,
	$$
where $A_k=A(\zeta_{k}(t))=2\pi\varpi\left(\frac{1}{\sqrt{1-\fe^{2}}}-1\right)$ denotes the action of the planar Kepler orbit $\zeta_{k}$.
\end{lem}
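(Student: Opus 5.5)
The plan is to turn the contact volume into a symplectic volume of the sublevel set via Stokes, and then evaluate that volume in action--angle (Delaunay) coordinates of the Kepler problem, where it becomes elementary.

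\textbf{Step 1 (reduction to a phase-space volume).} The sublevel set $B_h=\{H_\varpi\le h\}$ in $\rr^2\times\rr^+\times\rr$ is a compact region bounded by $\mathfrak{M}_0$. By the Morse argument of Proposition \ref{topo of energy surfaces} it is a $4$-ball, since $H_\varpi$ has a single nondegenerate minimum. The contact form $\lambda$ is a primitive of $\omega_0$ on $\mathfrak{M}_0$. If $\lambda$ extends to a primitive of $\omega_0$ on $B_h$, as for the Liouville-type forms of \cite[Theorem 4.8]{HZ94}, Stokes gives directly
$$\int_{\mathfrak{M}_0}\lambda\wedge d\lambda=\int_{B_h}\omega_0\wedge\omega_0 .$$
If $\lambda$ is only given on $\mathfrak{M}_0$, compare it with the global primitive $p_r\,dr+p_z\,dz$. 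Their difference is closed on $\mathfrak{M}_0\cong\mathbb{S}^3$, hence exact, so it changes the integral only by an exact $3$-form. Either way the volume equals $\int_{B_h}\omega_0^2=2\,\mathrm{Leb}(B_h)$, where the orientation is the one induced by $\lambda\wedge d\lambda>0$.

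\textbf{Step 2 (Delaunay coordinates).} For $\beta=0$ the reduced system is the spatial Kepler problem at fixed $L_z=\varpi$. Away from a measure-zero set (circular and planar orbits), $B_h$ carries Delaunay symplectic coordinates $(L,G,\ell,g)$ with $\omega_0=dL\wedge d\ell+dG\wedge dg$. Here $\ell,g\in\rr/2\pi\mathbb{Z}$, $H=-1/(2L^2)$, and $\varpi\le G\le L$. The condition $H\le h$ becomes $L\le L_h:=1/\sqrt{-2h}$. Therefore
$$\int_{B_h}\omega_0^2=2\cdot(2\pi)^2\cdot\mathrm{Area}\{\varpi\le G\le L\le L_h\}=4\pi^2(L_h-\varpi)^2 .$$

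\textbf{Step 3 (identification with $A_k$).} Since $\fe^2=1+2h\varpi^2$, we have $1-\fe^2=-2h\varpi^2$, so $\varpi/\sqrt{1-\fe^2}=L_h$. Hence $A_k=2\pi(L_h-\varpi)$ and the volume equals $A_k^2$.

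As a consistency check, $\zeta_k$ is the orbit with $G=\varpi$ (equatorial), with $\ell$ running once around. Its action $\oint p_r\,dr$ is the radial action $2\pi(L-G)=2\pi(L_h-\varpi)$, which agrees with the stated $A_k$. This also confirms that the contact form and $p\,dq$ give the same action on $\zeta_k$, as they should, since their difference is exact.

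\textbf{Main obstacle.} The delicate point is Step 2. One must justify that the reduced phase space (the quotient by the $z$-axis rotation at $L_z=\varpi$) is symplectomorphic, up to null sets, to the Delaunay chart $\{\varpi<G<L\}\times\mathbb{T}^2$ with exactly this range. One must also check that the singular loci ($G=\varpi$ equatorial, $G=L$ circular) carry zero measure.

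If this is troublesome, the fallback is a direct computation. Integrating out $p$ gives
$$\mathrm{Leb}(B_h)=2\pi\iint_{V_\varpi\le h}(h-V_\varpi)\,dr\,dz .$$
Pass to spherical-type coordinates $r=\rho\sin\phi$, $z=\rho\cos\phi$. First integrate in $\rho$ between the roots of $\frac{\varpi^2}{2\rho^2\sin^2\phi}-\frac1\rho=h$, then integrate in $\phi$. This should again yield $2\pi^2(L_h-\varpi)^2$.
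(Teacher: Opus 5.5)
The paper gives no proof of this lemma; it quotes it from \cite{HQY2025}, so there is nothing internal to compare your argument against. Your proof is correct and self-contained.

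Step 1 is sound. Since $H^1(\M_0;\rr)=0$, any primitive has the form $\lambda=\lambda_0+df$ with $\lambda_0=p_r\,dr+p_z\,dz$. Then $(\lambda_0+df)\wedge d\lambda_0=\lambda_0\wedge d\lambda_0+d(f\,d\lambda_0)$, so the volume is $\int_{B_h}\omega_0^2=2\,\mathrm{Leb}(B_h)$. The Delaunay count $2(2\pi)^2\cdot\frac12(L_h-\varpi)^2$, together with $L_h=\varpi/\sqrt{1-\fe^2}$, then gives exactly $A_k^2$.

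The step you flag as the main obstacle is harmless. The reduced space $\{p_\theta=\varpi\}/S^1$ with its reduced form is intrinsic. In Delaunay variables it is obtained by fixing the $z$-angular momentum at $\varpi$ and forgetting the node longitude. So the two charts are symplectomorphic on the full-measure set $\varpi<G<L$.

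Your fallback also goes through, and it avoids coordinates entirely. One has $\frac{d}{dh}\mathrm{Leb}(B_h)=2\pi\,\mathrm{Area}\{V_\varpi\le h\}$. In your polar coordinates this area equals $2L_h^4\int\sqrt{1-a^2/\sin^2\phi}\,d\phi$ with $a=\varpi/L_h$. The $\phi$-integral is elementary and equals $\pi(1-a)$, so the area is $2\pi L_h^3(L_h-\varpi)$. Integrating from $h=-1/(2\varpi^2)$ gives $\mathrm{Leb}(B_h)=2\pi^2(L_h-\varpi)^2$.

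A route closer to the contact-geometric tools the paper uses is the Zoll one. At $\beta=0$ every reduced orbit on $\M_0$ is closed. Its action is $2\pi L_h-2\pi\varpi=A_k$: the full Kepler action minus $\varpi$ times the $2\pi$ winding in $\theta$. So $\lambda$ is Zoll of Hopf type, and $\mathrm{vol}=A_k^2$ follows at once.

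That argument is slicker but works only at $\beta=0$. Your Stokes reduction to $\int_{B_h}\omega_0^2$ extends immediately to all $\beta>-1$. The $\beta$-sublevel set is the image of $B_h$ under $z\mapsto z/\sqrt{1+\beta}$, and the Jacobian of this map produces the factor $1/\sqrt{1+\beta}$ of Proposition \ref{prop: lower bound of vol}. Because this rescaling is not symplectic, your reduction is also the cleanest rigorous justification of that scaling step.
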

Based on this lemma and a scaling transformation, we further obtain the following result for $\beta>-1$.
\begin{prop}\label{prop: lower bound of vol}
For $\beta>-1$ and $-1<2h\varpi^2<0$, the volume satisfies
	$$
	\mathrm{vol}(\mathfrak{M}_{\beta},\lambda\wedge d\lambda)=\frac{A_k^2}{\sqrt{1+\beta}}.
	$$
\end{prop}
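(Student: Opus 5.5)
The plan is to relate $\mathfrak{M}_\beta$ to $\mathfrak{M}_0$ by a linear change of variables in the $z$-direction that is conformally symplectic with a constant factor. Then I compare the contact volumes, which reduce to symplectic volumes of the sublevel sets. Recall that for a contact-type hypersurface $\Sigma=\partial W$ in $\mathbb{R}^4$, with $\lambda$ a primitive of $\omega_0$ restricted to $\Sigma$, Stokes' theorem gives
\begin{equation*}
\int_{\Sigma}\lambda\wedge d\lambda=\int_W \omega_0\wedge\omega_0 .
\end{equation*}
Here $W=\{H_\varpi\le h\}$. The integral is independent of the choice of primitive, since two primitives differ by a closed, hence exact, one-form on a neighborhood of $\Sigma\cong\mathbb{S}^3$ (contributing $\int_\Sigma d(f\,d\lambda)=0$). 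So it suffices to compare the Lebesgue measures $\int_{W_\beta}\omega_0^2$ for different $\beta$. One small point needs care: the region $W$ lies in $\{r>0\}$ and is compact by Proposition \ref{topo of energy surfaces}(i), so Stokes applies on this open half-space, where $\omega_0$ is exact.

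Set $s=\sqrt{1+\beta}$ and consider the map $\Phi(p_r,p_z,r,z)=(p_r,p_z,r,sz)$. Then $V_\varpi^\beta(r,z)=V_\varpi^0(r,sz)$, but the kinetic term $p_z^2$ is unchanged, so $\Phi$ does not map $W_\beta$ onto $W_0$ directly. Instead I compute the volume by integrating out the momenta first. For fixed $(r,z)$ in the Hill region $\{V_\varpi\le h\}$, the admissible momenta form a disk of area $2\pi(h-V_\varpi(r,z))$. Hence
\begin{equation*}
\int_{W_\beta}\omega_0^2 = c\int_{\{V^\beta_\varpi\le h\}}2\pi\big(h-V^\beta_\varpi(r,z)\big)\,dr\,dz,
\end{equation*}
where $c$ is a fixed normalization constant ($\omega_0^2=2\,dp_r\wedge dr\wedge dp_z\wedge dz$ up to orientation). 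Substituting $w=sz$ turns the integrand into $h-V^0_\varpi(r,w)$ on the Hill region for $\beta=0$, with Jacobian $dz=dw/s$. Therefore $\int_{W_\beta}\omega_0^2=\frac{1}{s}\int_{W_0}\omega_0^2$.

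Combining this with Lemma \ref{vom 0} gives $\mathrm{vol}(\mathfrak{M}_\beta)=A_k^2/\sqrt{1+\beta}$. It remains to check that $A_k$ does not depend on $\beta$. This holds because $\zeta_k$ lies in $\{z=p_z=0\}$, where $H_\varpi$ does not involve $\beta$. Its action is $\int_{\zeta_k}\lambda=\int_{D}\omega_0$ for any disk $D$ that it bounds inside the $(p_r,r)$-plane, and this is the same for all $\beta$. Equivalently, one can take the Liouville-type primitive used in \cite{HZ94}, whose integral over the orbit is computed purely from the planar motion.

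The step I expect to need the most care is the justification that the contact volume equals the symplectic volume of the enclosed region independently of the primitive $\lambda$. This must also be consistent with the normalization used in Lemma \ref{vom 0} from \cite{HQY2025}. The cleanest route is to avoid absolute constants altogether: both volumes are given by the same formula $\int_W\omega_0^2$, so only the ratio $1/\sqrt{1+\beta}$ from the fiberwise computation matters, and the constant $c$ cancels.
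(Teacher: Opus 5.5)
Your proof is correct. Its core is the same as the paper's: rescale $z$ by $s=\sqrt{1+\beta}$, pick up the Jacobian factor, and invoke Lemma \ref{vom 0}. The difference is where the rescaling is applied.

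The paper pulls back $\lambda\wedge d\lambda$ along $\Phi_\beta$ directly on the $3$-manifold and writes $\Phi_\beta^*(\lambda\wedge d\lambda)=\frac{1}{\sqrt{1+\beta}}\lambda\wedge d\lambda$. Taken literally, this need not hold pointwise. $\Phi_\beta$ rescales only $dp_z\wedge dz$, so it is not conformally symplectic, and $\Phi_\beta^*\lambda$ is a primitive of $dp_r\wedge dr+s\,dp_z\wedge dz$ rather than of a constant multiple of $\omega_0$. The paper's write-up also has the direction reversed. Since $H^0_\varpi\circ\Phi_\beta=H^\beta_\varpi$, one has $\Phi_\beta(\mathfrak{M}_\beta)=\mathfrak{M}_0$, i.e.\ $\mathfrak{M}_\beta=\Phi_\beta^{-1}(\mathfrak{M}_0)$, and that is the direction that yields the factor $1/\sqrt{1+\beta}$. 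What actually holds is the integrated identity. Your reduction $\int_\Sigma\lambda\wedge d\lambda=\int_W\omega_0\wedge\omega_0$, together with independence of the primitive (because $H^1(\mathbb{S}^3)=0$), is exactly the justification the paper omits. It also means you never need to relate the contact forms on $\mathfrak{M}_\beta$ and $\mathfrak{M}_0$. Your explicit check that $A_k$ does not depend on $\beta$ is likewise left implicit in the paper.

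One correction: your claim that $\Phi$ does not map $W_\beta$ onto $W_0$ is false. Because $\Phi$ leaves $p_z$, and hence the kinetic term, untouched, and $V^\beta_\varpi(r,z)=V^0_\varpi(r,sz)$, you get $H^0_\varpi\circ\Phi=H^\beta_\varpi$ and so $\Phi(W_\beta)=W_0$. Nothing in your proof depends on this remark. Integrating over the momentum fibres and then substituting $w=sz$ is just the change of variables by $\Phi$ done in two stages. You could skip the fibres and write $\mathrm{vol}(W_\beta)=\mathrm{vol}(W_0)/\det D\Phi=\mathrm{vol}(W_0)/s$.
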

\begin{proof}
		By means of the scaling transformation $\Phi_\beta:(p_r,p_z,r,z)\rightarrow(p_r,p_z,r,\sqrt{1+\beta}z)$, we have $\mathfrak{M}_\beta=\Phi_\beta(\mathfrak{M}_0)$. Thus,
		$$
		\begin{aligned}
			\text{vol}(\mathfrak{M}_{\beta},\lambda\wedge d\lambda)&=\int_{\mathfrak{M}_{\beta}}\lambda\wedge\mathrm{d}\lambda=\int_{\Phi_\beta(\mathfrak{M}_0)}\lambda\wedge\mathrm{d}\lambda=\int_{\mathfrak{M}_0}\Phi_\beta^*(\lambda\wedge\mathrm{d}\lambda)=\frac{1}{\sqrt{1+\beta}}\int_{\mathfrak{M}_0}\lambda\wedge\mathrm{d}\lambda\\
			&=\frac{\text{vol}(\mathfrak{M}_{0},\lambda\wedge d\lambda)}{\sqrt{1+\beta}}=\frac{A_{k}^2}{\sqrt{1+\beta}},
		\end{aligned}
		$$
		where the last equality follows from Lemma \ref{vom 0}.
\end{proof}
\subsection{The global surfaces of section}
In this section, we study the global surfaces of section, which are useful in the study of periodic orbits.
\begin{defi} \label{defi: GlobalSurface} \cite[Chapter 9]{FO18} Let $X$ be a non-vanishing vector field on  $\mathbb{S}^3$ and $\varphi^X_t$ denote its corresponding flow.
A disk-like global surface of section is an embedded closed $2$-dimensional disk $\mathbb{D }\subset \mathbb{S}^3$ satisfying
\begin{enumerate}
\item[(i.)] $X$ is tangent to $\partial \mathbb{D}$, the boundary of $\mathbb{D}$;
\item[(ii.)] $X$ is transverse to $\mathbb{D}^{\circ}$ the interior of $\mathbb{D}$;
\item[(iii.)] for any $x \in \mathbb{S}^3 \setminus \partial \mathbb{D}$, there exist $t^+>0$ and $t^- <0$, such that both $\varphi_{t^+}^X(x)$ and $\varphi_{t^-}^X(x)$ are contained in the interior of $\mathbb{D}$.
\end{enumerate}
Clearly, $\partial \mathbb{D}$ is a periodic orbit of $X$ and is called the binding orbit.
\end{defi}
\begin{thm} \label{thm: GlobalSurface}
For $\beta>-1$ and $-1<2h\varpi^2<0$, the sets $\Sigma_{\pm}$ (defined below) are two disk-like global surfaces of section
for the Hamiltonian flow $\varphi_{t}$ corresponding to the Hamiltonian vector field $X_{H_{\varpi}}$,
$$
\begin{aligned}
&\Sigma_{+}=\{(p_{r},p_{z},r,z)\in\mathfrak{M}_{\beta}: z=0, p_{z}\geq0\},\\
&\Sigma_{-}=\{(p_{r},p_{z},r,z)\in\mathfrak{M}_{\beta}: z=0, p_{z}\leq0\},
\end{aligned}
$$
the binding orbit is precisely the planar Kepler orbit $\zeta_{k}(t)$.
\end{thm}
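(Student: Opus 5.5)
We verify the three conditions of Definition \ref{defi: GlobalSurface} directly for $\Sigma_+$; the argument for $\Sigma_-$ is the same after the reflection $(p_r,p_z,r,z)\mapsto(p_r,-p_z,r,-z)$, which preserves $H_\varpi$ and conjugates the flow to itself.

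\textbf{Disk structure and boundary.} The slice $\{z=0\}\cap\mathfrak{M}_\beta$ is the set $\{\tfrac12(p_r^2+p_z^2)+\tfrac{\varpi^2}{2r^2}-\tfrac1r=h\}$ in the $(p_r,p_z,r)$-space. Since $-1<2h\varpi^2<0$, the effective potential $\tfrac{\varpi^2}{2r^2}-\tfrac1r$ attains its minimum $-1/(2\varpi^2)<h$ at $r=\varpi^2$. Therefore this set is a sphere $\mathbb S^2$: it is a graph of $p_z^2=2h-p_r^2-2V_\varpi(r,0)$ over the annular-free topological disk $\{(p_r,r):\tfrac12p_r^2+V_\varpi(r,0)\le h\}$. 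The latter is a closed disk because its boundary is the closed curve $\{p_z=0\}$, which is exactly the Kepler orbit $\zeta_k$. Hence $\Sigma_+$ is an embedded closed disk with $\partial\Sigma_+=\{z=0,p_z=0\}\cap\mathfrak M_\beta$. The plane $\{z=p_z=0\}$ is invariant: $\dot z=p_z$ and $\dot p_z=-(1+\beta)z/(\cdot)^{3/2}$. So $X_{H_\varpi}$ is tangent to $\partial\Sigma_+$, and this boundary is the trajectory of $\zeta_k$, which gives (i).

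\textbf{Transversality.} On the interior of $\Sigma_+$ we have $z=0$ and $p_z>0$, so $\frac{d}{dt}z=p_z>0$. Thus the flow crosses $\{z=0\}$ transversally, always from $z<0$ to $z>0$, which gives (ii).

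\textbf{Return property (main step).} We must show that every orbit not equal to $\zeta_k$ has $z$ changing sign infinitely often in both time directions, with $p_z>0$ at some positive and some negative crossing. The $z$-equation is $\ddot z=-\frac{(1+\beta)}{(r^2+(1+\beta)z^2)^{3/2}}z=-a(t)z$. By Proposition \ref{topo of energy surfaces}, $r\in[r_{\min},r_{\max}]$ and $z$ is bounded on the compact surface, so $a(t)\ge a_0>0$ uniformly on $\mathfrak M_\beta$. Sturm comparison with $\ddot u=-a_0u$ then shows that any nontrivial solution $z(t)$ has zeros with gaps at most $\pi/\sqrt{a_0}$. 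A zero of $z$ with $\dot z=0$ would force $z\equiv0$ by uniqueness, and then the orbit lies in the invariant plane, i.e.\ on $\zeta_k$. So for $x\notin\zeta_k$ the zeros are simple and alternate between upward crossings ($p_z>0$, lying in $\Sigma_+^\circ$) and downward crossings (lying in $\Sigma_-^\circ$). This yields $t^\pm$ as required in (iii) for both disks.

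The only delicate point is the uniform lower bound on $a(t)$, and it comes from compactness. The bound $r\ge r_{\min}>0$ excludes collisions, so the flow is complete, and $r^2+(1+\beta)z^2$ is bounded above on $\mathfrak M_\beta$.
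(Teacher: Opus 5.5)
Your proposal is correct and follows essentially the same route as the paper: the boundary is the invariant set $\{z=p_z=0\}$ (the Kepler orbit $\zeta_k$), transversality comes from $\dot z=p_z\neq 0$ on the interior, and the return property comes from Sturm comparison for $\ddot z=-a(t)z$, using a uniform lower bound $a\ge a_0>0$ that follows from compactness of the energy surface. Your version is slightly more complete than the paper's, since you also verify that $\Sigma_\pm$ are closed disks, that zeros of $z$ off $\zeta_k$ are simple and alternate in the sign of $p_z$, and that returns occur in backward as well as forward time for every point not on $\zeta_k$; the paper leaves all of these implicit.
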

\begin{proof}
		We only consider $\Sigma_{+}$, the proof for $\Sigma_{-}$ is similar.
		First, by the above definition, one can see that the boundary $\partial \Sigma_{+}=\{(p_{r},p_{z},r,z)\in\mathfrak{M}_{\beta}: z=0, p_{z}=0\}$ coincides with the planar Kepler
		orbits $\zeta_{k}$, which is tangent to $X_{H_{\varpi}}$.
		Second, for any $\zeta=(p_{r},p_{z},r,z)\in \Sigma_{+}^{\circ}$, the interior of $\Sigma_{+}$, we have $z=0$. The tangent space $T_{\zeta}(\Sigma_{+}^{\circ})$ is $0$ along the direction $\partial_z$, while the component of the vector field $X_{H_{\varpi}}=J\nabla H_{\varpi}(\zeta)$ along the direction $\partial_z$ is $p_{z}>0$, this ensures that $X_{H_{\varpi}}$ is
		transverse to $\Sigma_{+}^{\circ}$. Last, for any orbit $\zeta(t)=(p_{r}(t),p_{z}(t),r(t),z(t))$ starting at $\zeta(0)\in\Sigma_{+}^{\circ}$, from equation (\ref{eq: RedHamSys}), one can see that there exists a constant $c>0$ such that
		$$\ddot{z}(t)=-\frac{(1+\beta)z(t)}{(r(t)^{2}+(1+\beta)z(t)^{2})^{3\slash2}}\text{ and } \frac{(1+\beta)}{(r(t)^{2}+(1+\beta)z(t)^{2})^{3\slash2}}\geq c.
		$$ The reasion is $r(t)$ and $z(t)$ are bounded in $\M_{\beta}$ by Proposition \ref{topo of energy surfaces}. So we have 
		$\ddot{z}(t)\leq-cz(t)$ for $z(t)\geq0$ and $\ddot{z}(t)\geq-cz(t)$ for $z(t)\leq0$. By Sturm Comparison Theorem, this implies that there exist $t_{+}>t_{1}>0$ such that $p_{z}(t_{1})<0, z(t_{1})=0$ and $p_{z}(t_{+})>0, z(t_{+})=0$, that is $\zeta(t_{1})\in \Sigma_{-}^{\circ}$ and $\zeta(t_{+})\in \Sigma_{+}^{\circ}$. This completes the proof.
\end{proof}
\subsection{Applications to periodic orbits}
In this section, we give a criterion for the existence of infinitely many periodic orbits on $\mathfrak{M}(h,\varpi,\beta)$. This is based on the volume formula in \cite{CHHL23} and Franks' theorem in \cite{F96}. In general, a Reeb orbit is a periodic orbit of Reeb vector field. Assume $\zeta$ is a Reeb orbit and $\gamma$ its monodromy matrix. We say $\zeta$ is hyperbolic if all the eigenvalues of $\gamma$ are real but not equal to $\pm1$. We say $\zeta$ is elliptic if all the eigenvalues of $\gamma$ are on the unit circle $\mathbb{U}$ and are semi-simple. Then we have
\begin{thm}\label{thm:The condition of only two}
	\cite[Theorem 1.2 and 1.5]{CHHL23} Let $S$ be a three-dimensional sphere, and $\lambda$ be a contact form on $S$ with exactly two Reeb  orbits, $\zeta_{1}$ and $\zeta_{2}$. Let $A_{i}$ and $\rho_{i}$ be the action and  Seifert rotation number of $\zeta_{i},\ i=1,2$ and $\emph{vol}(S,\lambda\wedge d\lambda)$ be the contact volume of $S$. Then
	\begin{equation}\label{eq:Kepler equal}
		\emph{vol}(S,\lambda\wedge d\lambda)=\frac{A_{1}^{2}}{\rho_{1}}=\frac{A_{2}^{2}}{\rho_{2}},\nonumber
	\end{equation}
	and both Reeb orbits are irrationally elliptic.
\end{thm}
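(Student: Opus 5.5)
This theorem is cited from \cite{CHHL23}, so within the paper one would simply invoke it. A self-contained proof would go through the following steps.

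\emph{Step 1: global surface of section.} Since $\lambda$ has exactly two Reeb orbits on $S\cong\mathbb{S}^3$, neither can be hyperbolic. A hyperbolic orbit would, by the Hofer--Wysocki--Zehnder theory of dynamically convex (or, more generally, non-degenerate) contact forms and the associated holomorphic curve arguments, force additional homoclinic structure and hence infinitely many orbits. The same theory should then show that $\zeta_1$ and $\zeta_2$ are unknotted and form a Hopf link. Each $\zeta_i$ bounds a disk-like global surface of section $\mathbb{D}_i$ whose interior meets $\zeta_j$ ($j\ne i$) exactly once. This step relies on the two-or-infinity dichotomy of Cristofaro-Gardiner--Hutchings--Pomerleano for nondegenerate forms, extended to the degenerate case, and on the existence of a Birkhoff section bounded by each orbit.

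\emph{Step 2: rotation numbers and irrationality.} The first return map $\psi$ on $\mathbb{D}_1^\circ$ preserves the area form $d\lambda$. Its total area is $\int_{\mathbb{D}_1}d\lambda=\int_{\zeta_1}\lambda=A_1$. The only periodic point of $\psi$ is the single fixed point $\zeta_2\cap\mathbb{D}_1$. By Franks' theorem \cite{F96}, an area-preserving homeomorphism of the open annulus $\mathbb{D}_1^\circ\setminus\{\zeta_2\}$ with no periodic points must have boundary rotation numbers equal to a common irrational number. This gives irrational rotation at both ends, so both orbits are irrationally elliptic.

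\emph{Step 3: the volume identity.} The return time $\tau$ is the Reeb first-hitting time, so
\[
\mathrm{vol}(S,\lambda\wedge d\lambda)=\int_{\mathbb{D}_1}\tau\, d\lambda .
\]
Writing $\lambda=\lambda_0+d\phi$ near the fixed point, the action functional of the map satisfies $\tau=\sigma\circ\psi-\sigma+$const in a suitable primitive sense. Averaging with respect to the invariant measure gives a Calabi-type invariant. For a pseudo-rotation with irrational rotation number $\rho$, this invariant is determined by the boundary data: $\int\tau\,d\lambda=A_1\cdot(\text{mean return time})=A_1\cdot A_1/\rho_1$. Here the mean return time equals $A_1/\rho_1$ because the linking/rotation count of orbits around $\zeta_1$ relates the period to the number of windings.

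Symmetrizing with $\mathbb{D}_2$ yields the second equality $\mathrm{vol}=A_2^2/\rho_2$. Consistency of the two expressions also gives $\rho_1\rho_2=1$ in suitable normalizations.

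\emph{Main obstacle.} The hardest step is Step 3: justifying that the Calabi invariant of an irrational pseudo-rotation equals the boundary value $A_1/\rho_1$. I would prove this by uniform approximation, showing that the action of the fixed point and the boundary are joined through Birkhoff averages, using the fact that all orbits have the same asymptotic rotation number.
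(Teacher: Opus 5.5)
Invoking \cite{CHHL23} is exactly what the paper does; it supplies no argument of its own, so that part of your proposal is fine. Your self-contained sketch, however, is not a proof, and it breaks down exactly where the content of \cite{CHHL23} lies.

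Step~1 assumes what has to be shown. For a general contact form on $\mathbb{S}^3$ (not dynamically convex, possibly degenerate), the mechanism ``hyperbolic orbit $\Rightarrow$ homoclinic structure $\Rightarrow$ infinitely many orbits'' is not available. You also never deal with degenerate orbits (multipliers that are roots of unity, e.g.\ parabolic ones), which are neither hyperbolic nor irrationally elliptic. That a form with exactly two simple Reeb orbits is nondegenerate is the main theorem of \cite{CHHL23}. A two-or-infinity dichotomy ``extended to the degenerate case'' is at least as strong, so it cannot serve as an input. Likewise, the Hopf-link structure and a disk-like global surface of section bounded by each $\zeta_i$ are not available a priori for such a form, and without them Steps~2 and~3 have nothing to act on. Granting the sections, Step~2 is essentially the standard argument that a disk pseudo-rotation has irrational rotation number, modulo extending the return map to the blown-up closed annulus.

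The more serious gap is Step~3: the key identity is asserted, and the justification is circular. Franks' theorem on $\mathbb{D}_1^\circ\setminus\{\zeta_2\}$ only controls how orbits wind around the single fixed point, i.e.\ the asymptotic ratio of crossings of $\mathbb{D}_1$ and $\mathbb{D}_2$. Combined with the fluxes $\int_{\mathbb{D}_i}d\lambda=A_i$, this yields relative relations such as $\rho_1\rho_2=1$ and $\rho_1=A_1/A_2$. (This, not a comparison of the two volume formulas, is where $\rho_1\rho_2=1$ comes from.) It gives no absolute normalization of time. The identity $\int_{\mathbb{D}_1}\tau\,d\lambda=\mathrm{vol}$ is tautological, and Birkhoff averages of $\tau$ have mean $\mathrm{vol}/A_1$ whatever $\mathrm{vol}$ is, so ``uniform approximation through Birkhoff averages'' cannot pin it down.

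What you actually need is that the mean return time equals the return time $A_2=A_1/\rho_1$ of the fixed point. In other words, the Calabi invariant of this irrational pseudo-rotation must equal its fixed-point (equivalently, boundary) action. That statement concerns the mutual winding of \emph{all pairs} of orbits (helicity, Gambaudo--Ghys). Knowing that every orbit rotates about the fixed point at the same rate does not provide it.

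Also, the correct relation on the page is $\psi^*\lambda-\lambda=d\tau$, so $\tau$ is itself the action function. A relation $\tau=\sigma\circ\psi-\sigma+c$ with bounded $\sigma$ would force every Birkhoff average of $\tau$ to equal $c=\tau(\zeta_2\cap\mathbb{D}_1)=A_2$, which is the conclusion itself.

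In \cite{CHHL23} the missing global input comes from embedded contact homology. The only orbit sets are $\zeta_1^m\zeta_2^n$, with action $mA_1+nA_2$ and ECH index growing roughly like $\rho_1m^2+2mn+\rho_2n^2$. The volume property of ECH spectral invariants ($c_k^2/k\to 2\,\mathrm{vol}$) then forces $(mA_1+nA_2)^2\approx\mathrm{vol}\,(\rho_1m^2+2mn+\rho_2n^2)$ along every ray. That is exactly $\mathrm{vol}=A_1^2/\rho_1=A_2^2/\rho_2\ (=A_1A_2)$.
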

\begin{rem}\label{sef rotation}
From Proposition \ref{topo of energy surfaces}, when $\beta>-1, -1<2h\varpi^2<0$, the energy surface $\mathfrak{M}(h,\varpi,\beta)$ is a compact and regular manifold homeomorphic to $\mathbb{S}^{3}$ with binding orbits $\zeta_{k}$.
From (5-2) in \cite{CHHL23}, the Seifert rotation number $\rho_{kep}$ of the binding orbit is related to its mean index $\hat{i}(\gamma_{kep})$
(See the Definition \ref{def:Maslov-type index} of the mean index in the Appendix),
\begin{equation}\label{Sef rotaion and mean index}
\rho_{kep}=\frac{\hat{i}(\gamma_{kep})}{2}-1,
\end{equation}
where $\gamma_{kep}$ is the fundamental solution of the linearized equation of Hamiltonian system (\ref{eq: RedHamSys}) along $\zeta_{k}$.
\end{rem}
Moreover, based on the global surface of section, one can see that the Poincar\'{e} map $g: \Sigma_{+}\rightarrow \Sigma_{+}$ contains essentially all the relevant information on the Hamiltonian flow $\varphi_{t}(x)$ on the three-dimensional
energy surface $\mathfrak{M}(h,\varpi,\beta)$. Instead of the continuous flow on the three-dimensional manifold, we
can study the Poincar\'{e} map. Periodic orbits of $\varphi_{t}(x)$ different from the binding orbit $\zeta_{k}$ correspond to interior periodic points of
the Poincar\'{e} map on $\Sigma_{+}^{\circ}$.

As mentioned in \cite{HWZ98}, the Poincar\'{e} map $g: \Sigma_{+}^{\circ}\rightarrow \Sigma_{+}^{\circ}$ is
conjugated to an area preserving diffeomorphism $\psi: \mathbb{D}^{\circ}\rightarrow \mathbb{D}^{\circ}$ of the open disk $\mathbb{D}^{\circ}=\{z\in\mathbb{C}| |z|<1\}$.
By Brouwer's translation theorem \cite{Fr92}, $\psi$ must have a fixed point $z^{*}\in\mathbb{D}^{\circ}$. Then we can restrict the map
$\psi$ to the open annulus $\mathbb{D}^{\circ}\setminus\{z^*\}$ and apply the following theorem due to Franks to obtain Corollary \ref{infini peri orbit}.
\begin{thm}(\cite{F96})\label{Franks thm}
An area preserving homeomorphism of the open annulus which has at least one periodic point must in fact have infinitely many interior
periodic points.
\end{thm}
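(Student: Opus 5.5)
The statement is Franks' theorem, which the paper imports from \cite{F96} and uses as a black box; the following is only a sketch of how I would reconstruct its proof. Let $f$ be an area preserving homeomorphism of the open annulus $A=S^1\times(0,1)$ with a periodic point $x_0$ of period $q$. If $f$ reverses orientation or swaps the ends, I will pass to $f^2$ or $f^4$. This is harmless, since infinitely many periodic points of an iterate are infinitely many periodic points of $f$. So I assume $f$ is isotopic to the identity.

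Fix a lift $F:\widetilde A=\rr\times(0,1)\to\widetilde A$ and let $T(s,t)=(s+1,t)$. The periodic point gives an integer $p_0$ with $F^q(\tilde x_0)=T^{p_0}\tilde x_0$, so $\tilde x_0$ has rotation number $p_0/q$. The basic tool is Franks' disk-chain lemma: an orientation preserving homeomorphism of the plane that has a periodic chain of free disks (disks $U$ with $F(U)\cap U=\emptyset$) has a fixed point. Combined with area preservation, which gives non-wandering and Poincar\'e recurrence, this yields a generalized Poincar\'e--Birkhoff theorem:
\begin{quote}
if some lift $G=T^{-a}F^{b}$ has one recurrent point that moves to the right and another that moves to the left, then $G$ has a fixed point.
\end{quote}
Such a fixed point projects to a periodic point of $f$ with rotation number $a/b$.

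\emph{Case 1.} Some point, or some ergodic invariant measure, has rotation number $\rho\neq p_0/q$. I would first extract a recurrent point realizing a rotation number near $\rho$; Birkhoff's ergodic theorem applied to the displacement function does this. Then for every rational $a/b$ strictly between $p_0/q$ and $\rho$, with $\gcd(a,b)=1$, the lift $T^{-a}F^{b}$ has recurrent points drifting in opposite directions. The generalized Poincar\'e--Birkhoff theorem then gives a periodic point with rotation number $a/b$. Distinct reduced fractions give distinct periodic orbits, so there are infinitely many.

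\emph{Case 2.} Every point has rotation number $p_0/q$. I replace $F$ by $G=T^{-p_0}F^q$, so that every point has rotation number $0$, and argue by contradiction, assuming $f$ has only finitely many periodic points. Then $G$ has only finitely many fixed points modulo $T$, and each is isolated. I would remove them and use Brouwer plane translation theory on each component. Without fixed points, every free disk's orbit is eventually disjoint from the disk, which contradicts recurrence coming from area preservation. So the fixed point set is nonempty, and the task is to show it cannot be finite. For this I would compute Lefschetz--Conley indices. For an area preserving planar homeomorphism, an isolated fixed point has index at most $1$, and the indices can be controlled by whether nearby points rotate positively or negatively around the fixed point. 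Compactifying the annulus to a sphere by adding its two ends, the total index equals $\chi(S^2)=2$. Vanishing rotation number near the ends forces the ends to contribute index $1$ each, so the fixed points in $A$ have indices summing to $0$. Some fixed point must therefore have index $\le 0$. Near such a point, recurrent points circulate in both senses, and the disk-chain argument applied to $T^{-a}G^{b}$ for small $a/b$ of either sign produces new periodic points. That contradicts finiteness.

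\emph{Main obstacle.} I expect Case 2 to be the hard step. In the other steps the needed statements exist in some form, but here one must make the index bookkeeping at the ends of a noncompact annulus rigorous, and show that a degenerate fixed point (index $\le 0$) really produces nearby recurrent points of opposite rotation. My first attempt would be to work with the prime-end or end compactification, and to apply Franks' disk-chain lemma to $T^{-a}G^b$ with $a/b\to0^\pm$. If that fails, the fallback is Le Calvez's equivariant Brouwer foliation theorem, which gives transverse trajectories and makes the drift argument cleaner.
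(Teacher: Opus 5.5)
The paper does not prove this statement. It quotes it from Franks and uses it as a black box, applied to the Poincar\'e map on $\Sigma_+^\circ$ with a Brouwer fixed point removed. So your reconstruction has to stand on its own.

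Your reduction to $f$ isotopic to the identity is fine, and Case 1 is the standard generalized Poincar\'e--Birkhoff argument, fine in outline. One correction: on the open annulus the displacement need not be integrable and rotation numbers need not exist. The dichotomy should therefore be phrased via positively and negatively returning free disks for the lifts $T^{-a}F^{b}$, not via pointwise rotation numbers. As stated, ``every point has rotation number $p_0/q$'' is not the complement of Case 1.

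The genuine gap is Case 2, and it is not only a matter of rigor: both of its key claims fail.

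\emph{Vanishing rotation near the ends does not force index $1$ at $N$ and $S$.} Let $f$ be the time-one map of the Hamiltonian flow of $\epsilon H$, where $H=xy$ on $S^2\subset\mathbb{R}^3$, $\epsilon>0$ is small, and the ends are $N,S=(0,0,\pm1)$.
\begin{itemize}
\item The poles are saddles of index $-1$.
\item The set $\{xy=0\}$ consists of four heteroclinic arcs from $N$ to $S$.
\item Every other nonconstant orbit is a circle around one of the four extrema on the equator, hence contractible in $S^2\setminus\{N,S\}$.
\end{itemize}
So every point has rotation number $0$ and you are in Case 2. Yet the fixed points in $A$ are the four extrema, all of index $+1$, with index sum $4$, not $0$. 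Your bookkeeping therefore produces no interior fixed point of index $\le 0$. What Pelikan--Slaminka plus Lefschetz actually give is only that among the $\ge 3$ fixed points $N,S,x_0$ of the extension, some one (possibly an end) has index $\le 0$.

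\emph{Even granted such a point, it is the wrong place to look.} An isolated fixed point of index $\le 0$ is saddle-like, not rotating: nearby orbits of a Hamiltonian saddle do not wind around it, and in general a fixed point of index $\neq 1$ has zero local rotation. In any case, returning disks for $T^{-a}G^{b}$ with $a\neq 0$ detect winding around the core of $A$. A neighbourhood of an interior fixed point is contractible in $A$, so it contributes nothing to that winding.

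In the example, the periodic orbits come from changing the annulus. In $S^2\setminus\{e,N\}$ with $e$ an extremum, orbits near $e$ rotate nontrivially while the fixed point $S$ has rotation $0$, so your Case 1 applies there.

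This shows what is actually missing. After compactifying and, under the contradiction hypothesis, passing to an iterate in which all periodic points are fixed, Case 1 applied to every annulus $S^2\setminus\{y,z\}$ with $y,z$ fixed only shows that all rotation numbers vanish in each such annulus. One still needs a genuinely global argument to exclude this totally degenerate configuration. That is the real content of Franks' theorem, and it is where a tool like Le Calvez's equivariant foliated Brouwer theorem (your fallback) is needed. As written, Case 2 is not a proof.
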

\begin{cor}\label{infini peri orbit}
For $\beta>-1, -1<2h\varpi^2<0$, if $\rho_{kep}\neq A_{k}^2/\emph{vol}(\mathfrak{M}_{\beta},\lambda\wedge d\lambda)=\sqrt{1+\beta}$ or $\rho_{kep}\in\mathbb{Q}$,
then the spatial anisotropic Kepler problem has infinitely many relative periodic orbits on $\mathfrak{M}(h,\varpi,\beta)$.
\end{cor}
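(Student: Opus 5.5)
We argue by contradiction, using the dichotomy that the surface of section provides. By Theorem \ref{thm: GlobalSurface}, $\Sigma_+$ is a disk-like global surface of section with binding $\zeta_k$. By the discussion preceding Theorem \ref{Franks thm}, the first return map is conjugate to an area-preserving diffeomorphism $\psi$ of $\mathbb{D}^\circ$. Brouwer's translation theorem gives a fixed point $z^*$ of $\psi$, which corresponds to a periodic orbit $\zeta_*\neq\zeta_k$.

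Now consider two cases according to the periodic points of $\psi$ other than $z^*$. If $\psi$ has some periodic point in $\mathbb{D}^\circ\setminus\{z^*\}$, then $\psi$ restricted to the open annulus $\mathbb{D}^\circ\setminus\{z^*\}$ is an area-preserving homeomorphism with a periodic point. Theorem \ref{Franks thm} then yields infinitely many interior periodic points. Different periodic points lying on distinct $\psi$-orbits give geometrically distinct periodic orbits of $\varphi_t$. Since each periodic orbit of the flow meets $\Sigma_+^\circ$ in only finitely many points, infinitely many periodic points produce infinitely many periodic orbits on $\mathfrak{M}_\beta$. Each of these gives a family of relative periodic orbits of the spatial anisotropic Kepler problem, by the correspondence explained in the introduction. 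Hence, whenever the conclusion fails, $\psi$ has exactly one periodic point $z^*$.

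In that case the Reeb flow of $(\mathfrak{M}_\beta,\lambda)$ has exactly two periodic orbits, $\zeta_k$ and $\zeta_*$. This uses two facts. First, the Reeb flow is a reparametrization of the Hamiltonian flow, so the two flows have the same closed orbits. Second, $\mathfrak{M}_\beta$ is a contact three-sphere by Proposition \ref{topo of energy surfaces}. Theorem \ref{thm:The condition of only two} therefore applies and gives
\[
\mathrm{vol}(\mathfrak{M}_\beta,\lambda\wedge d\lambda)=\frac{A_k^2}{\rho_{kep}}.
\]
It also shows that both orbits are irrationally elliptic, so in particular $\rho_{kep}\notin\mathbb{Q}$. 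By Proposition \ref{prop: lower bound of vol}, the volume equals $A_k^2/\sqrt{1+\beta}$, hence $\rho_{kep}=\sqrt{1+\beta}=A_k^2/\mathrm{vol}$. This contradicts both alternatives of the hypothesis: it contradicts $\rho_{kep}\neq\sqrt{1+\beta}$, and it contradicts $\rho_{kep}\in\mathbb{Q}$, since irrational ellipticity gives an irrational rotation number. Here we should check that the Seifert rotation number in Theorem \ref{thm:The condition of only two} is the same $\rho_{kep}$ as in Remark \ref{sef rotation}.

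The main point needing care is the identification of the two settings. Counting orbits on the surface of section must match the hypothesis of Theorem \ref{thm:The condition of only two}: there must be exactly two closed Reeb orbits, the binding and the one through $z^*$. Here we use that every closed orbit other than the binding meets $\Sigma_+^\circ$, by property (iii) of Definition \ref{defi: GlobalSurface}, and so gives a periodic point of $\psi$. We must also make sure that the periodic points produced by Franks' theorem lie in the open disk and not on the boundary orbit. This holds because they are interior points of the annulus $\mathbb{D}^\circ\setminus\{z^*\}$.
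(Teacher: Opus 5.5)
Your proposal is correct and follows the paper's argument. The paper uses the HWZ98-style dichotomy: a Brouwer fixed point $z^*$, then Franks' theorem on the punctured disk $\mathbb{D}^\circ\setminus\{z^*\}$, with the exactly-two-orbits case ruled out by Theorem \ref{thm:The condition of only two} and $\mathrm{vol}=A_k^2/\sqrt{1+\beta}$. You also spell out the case split and the orbit-counting step (every closed orbit other than $\zeta_k$ meets $\Sigma_+^\circ$), which the paper leaves implicit in the discussion before the corollary.
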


\section{Proof of the Main Theorem}

\subsection{Seifert rotation number of planar Kepler orbit $\zeta_{k}$}
In this section, we will study the Seifert rotation number of $\zeta_{k}$ and the process is similar to the Euler orbit in the spatial isosceles three body problem \cite{HLOYS23}. Consider the fundamental solution $\gamma_{kep}$ along the binding orbit $\zeta_k$, we have
$$\dot{\gamma}_{kep}(t)=J\mathcal{B}(t)\gamma_{kep}(t),\ \ \gamma_{kep}(0)=I.
$$	
where $\mathcal{B}(t)=diag(1,1,(3\varpi^2-2r_k(t))r_k(t)^{-4},(1+\beta)r_k(t)^{-3})$.
Then it can be decomposed into two subsystems,
\begin{equation}\label{linear equ1}
\dot{\gamma}_{1}(t)=J_2\left(\begin{array}{cc}
	1 & 0\\
	0 & (3\varpi^2-2r_k(t))r_k(t)^{-4}
\end{array}\right)\gamma_{1}(t),
\end{equation}
and
\begin{equation}\label{ess equ}
\dot{\gamma}_{2}(t)=J_2\left(\begin{array}{cc}
	1 & 0\\
	0 & (1+\beta)r_k(t)^{-3}
\end{array}\right)\gamma_{2}(t).
\end{equation}
One can check that $\dot{\zeta}_k(t) = (\ddot{r}_k(t), 0, \dot{r}_k(t), 0)$  satisfies $\ddot{\zeta}_k(t)=J\mathcal{B}(t)\dot{\zeta}_k(t)$. Then $\xi_1(t)=c\cdot(\ddot{r}_k(t),\dot{r}_k(t))$ is a periodic solution of \eqref{linear equ1}.  Choosing $c$ properly, we have $\xi_1(0)=(1,0)^T$ and solution $\xi_1$ becomes the first column of $\gamma_{1}$.  Let $\xi_2$ be the second solution of  \eqref{linear equ1} which satisfies $\xi_2(0)=(0,1)^T$, then $\gamma_{1}=(\xi_1, \xi_2)$. Direct computation shows that
\begin{eqnarray*}
\begin{aligned}
\sum_{0<\hat{t}\leq mT} \operatorname{dim} \gamma_{1}\left(\hat{t}\right) \Lambda_{D} \cap \Lambda_{D}=\#\{\hat{t}:\dot{r}_{k}(\hat{t})=0,t\in(0,mT]\}
=\#\{\hat{t}:\sin\theta(\hat{t})=0,\hat{t}\in(0,mT]\}
=2m-1,
\end{aligned}
\end{eqnarray*}
where $\Lambda_D=\mathbb{R}\oplus\{0\}$ is the Lagrangian subspace of $\rr^2$. From (\ref{mean index2}) in Appendix, we have
\begin{eqnarray}\label{mean index1}
\hat{i}(\gamma_{1})=\lim_{k\to +\infty}\frac{1}{m}\sum_{0<\hat{t}\leq mT} \operatorname{dim} \gamma_{1}\left(\hat{t}\right) \Lambda_{D} \cap \Lambda_{D}=2.\nonumber
\end{eqnarray}
For $\hat{i}(\gamma_{2})$, based on $\dot{\theta}(t)=\varpi/r^2_{k}(t)$, we can
change the time variable $t$ to $\theta$ and denote $'=d/d\theta$, then
$$\gamma_{2}'(t(\theta))=J_2\left(\begin{array}{cc}
	r_k(\theta)^2\varpi^{-1} & 0\\
	0 & (1+\beta)r_k(\theta)^{-1}\varpi^{-1}
\end{array}\right)\gamma_{2}(t(\theta)),
$$To further simplify the above linear system, we use a time-dependent linear symplectic transformation and let
$$\gamma_{\beta,\fe}(\theta)=\mathcal{R}(\theta)\gamma_{2}(t(\theta)),\ \
\text{where}\ \ \mathcal{R}(\theta)=\left(\begin{array}{cc}
\frac{r_k}{\sqrt{\varpi}} & -\frac{\fe\sin\theta}{\sqrt{\varpi}^3}\\
0 &\frac{\sqrt{\varpi}}{r_k}
\end{array}\right).
$$
Direct computation shows that
\begin{equation}\label{eq;linearized equation}
	\gamma'_{\beta,\fe}(\theta)=J_2\left(\begin{array}{cc}
		1 & 0\\
		0 & 1+\beta(1+\fe\cos\theta)^{-1}
	\end{array}\right)\gamma_{\beta,\fe}(\theta).
\end{equation}
From the symplectic invariance of the mean index, we have
$
\hat{i}(\gamma_{2})=\hat{i}(\gamma_{\beta,\fe}),
$
hence
$$
\hat{i}(\gamma_{kep})=\hat{i}(\gamma_{1})+\hat{i}(\gamma_{2})=2+\hat{i}(\gamma_{\beta,\fe}).
$$
Combined with (\ref{Sef rotaion and mean index}) in Remark \ref{sef rotation}, we obtain
\begin{equation*}\label{Sef rotaion and mean index2}
\rho_{kep}=\frac{\hat{i}(\gamma_{\beta,\fe})}{2},\ \ \text{for}\ \ \beta>-1.
\end{equation*}
In the following section, we will analyze the properties of the mean index $\hat{i}(\gamma_{\beta,\fe})$.
\subsection{Fundamental properties of degenerate curves}
Consider the following family of operators on $L^2([0,2\pi], \mathbb{C})$ with domain $\bar{D}(\omega,2\pi)$ for $\omega\in\mathbb{U}$.
$$
\mathcal{A}(\beta,\fe)=-\frac{d^2}{d\theta^2}-1-\frac{\beta}{1+\fe\cos \theta}\quad \text{where}\ \beta>-1,\fe\in(0,1)
$$
where
$$
\bar{D}(\omega,2\pi)=\{W^{2,2}([0,\tau], \mathbb{C}^n)|y(2\pi)=\omega y(0), \dot{y}(2\pi)=\omega \dot{y}(0)\}.
$$
From index relation (\ref{index equ}) in Appendix, we have
\begin{equation}\label{index equ1}
\phi_{\omega}(\mathcal{A}(\beta,\mathfrak{e}))=i_{\omega}(\gamma_{\beta,\mathfrak{e}}), \ \ v_{\omega}(\mathcal{A}(\beta,\mathfrak{e}))=\nu_{\omega}(\gamma_{\beta,\mathfrak{e}}),
\end{equation}
where $\phi_\omega(\mathcal{A})$ denotes the $\omega$-Morse index of $\mathcal{A}$.

In \cite{Z19}, Zhou first used the Maslov-type index theory to study the properties of
$\mathcal{A}(\beta,\fe)$ for $\beta\in(-1,0]$. This relates to the stability of $\gamma_{\beta,\fe}(2\pi)$, which corresponds to the spatial part of the Robe's restricted three-body problem. Further, the
study of $\mathcal{A}(\beta,\fe)$ for any $\beta>-1$ is given by Hu, Ou and Tang \cite{HOT23}. Following \cite{HOT23}, we say that $\mathcal{A}(\beta,\mathfrak{e})$ is $\omega$-degenerate, if $v_{\omega}(\mathcal{A}(\beta,\mathfrak{e}))\neq 0$. For fixed $\omega$-boundary condition and $\fe\in[0,1)$, let $\beta_i(\fe,\omega), i\in\mathbb{Z}^{+}$ be the $i$-th eigenvalue (counting multiplicities) of $(1+\fe\cos\theta)(-d^2/d\theta^2-1),$ that is $\mathcal{A}(\beta_i(\mathfrak{e}, \omega),\mathfrak{e})$ is
$\omega$-degenerate. Then we have
$
\beta_i(\fe,\omega)=\beta_i(\fe,\bar{\omega})
$
and
$\beta_i(\fe,\omega)\leq\beta_j(\fe,\omega)$ for $i<j$.

We define $n$-th $\omega$-degenerate curve by $\Gamma_n(\omega)=(\beta_n(\fe,\omega),\fe)$. The following numerical figure is taken from \cite{Z19}, we can see the distribution of
these degenerate curves $\Gamma_1(-1)$ and $\Gamma_2(-1)$, which form the boundary of the stable region.
\begin{figure}[H]
\centering
\includegraphics[height=
0.5\textwidth,width=0.83\textwidth]{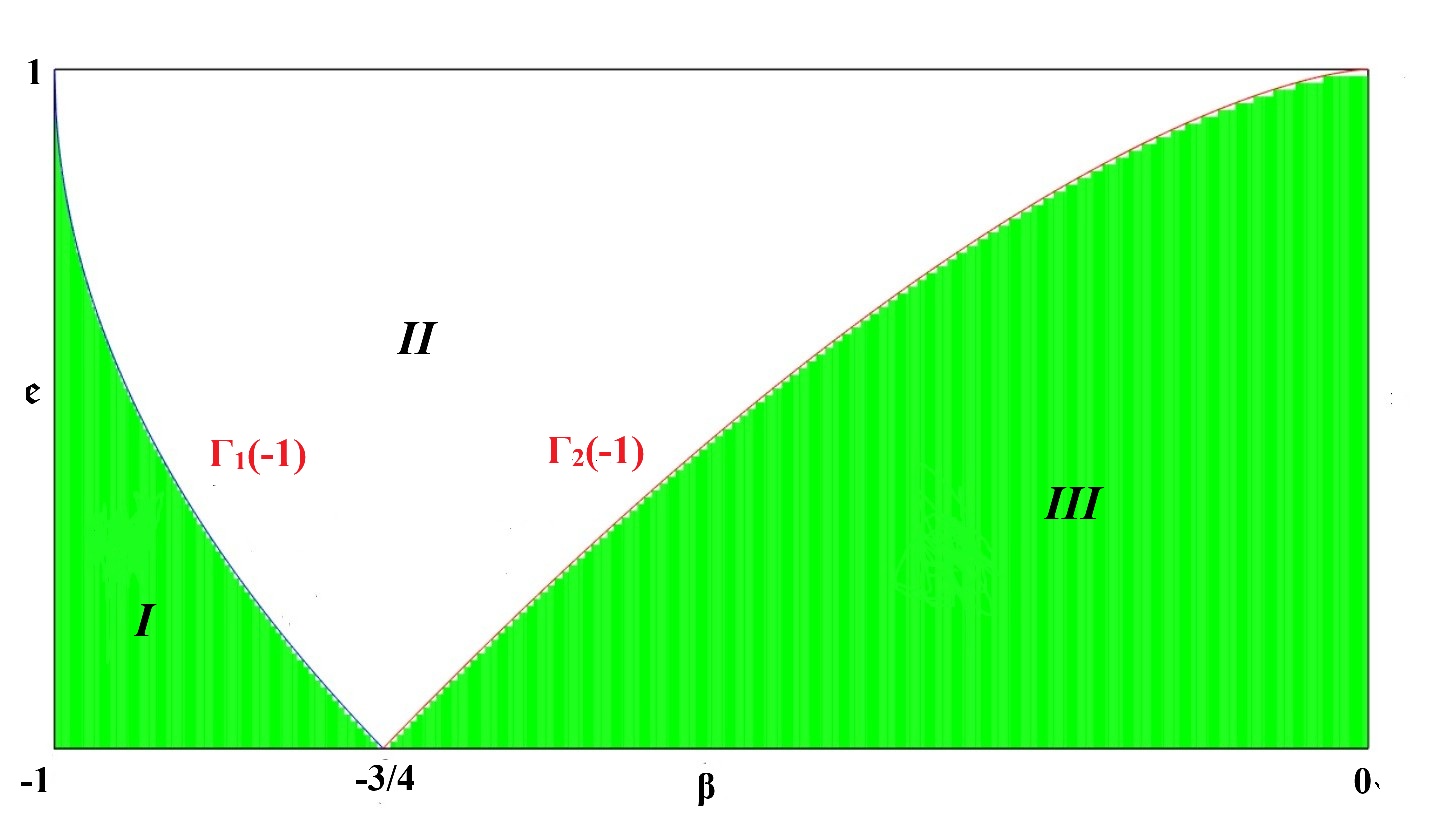}
\caption{$I, III$ are stable region, while $II$ is the hyperbolic region.}
\label{picture of Birfurcation}
\end{figure}
In this paper, we focus on the case $\beta\in(-1,0]$. We list the properties of $\mathcal{A}(\beta,\fe)$ in the following Theorem \ref{prop of degcurve}, which will be used in our arguments.
\begin{thm}(\cite{HOT23,Z19})\label{prop of degcurve}
Consider $\beta\in(-1,0]$, then for any fixed $\mathfrak{e}\in[0,1), \omega\in\mathbb{U}$, $i_{\omega}(\gamma_{-1,\mathfrak{e}})=0$ and $\beta\mapsto i_{\omega}(\gamma_{\beta,\mathfrak{e}})$ is non-decreasing. More precisely, $\lim_{\epsilon\rightarrow0^+}i_{\omega}(\gamma_{\beta+\epsilon,\mathfrak{e}})=i_{\omega}(\gamma_{\beta,\mathfrak{e}})+\nu_{\omega}(\gamma_{\beta,\mathfrak{e}})$ and
$i_{\omega}(\gamma_{\beta,\mathfrak{e}})$ is strictly increasing only
when $\beta$ crosses $\beta_{j}(\mathfrak{e},\omega),j=1,2$. Furthermore, the curves $\Gamma_{j}(\omega)$ possess the following properties:

\vskip 0.15 cm
i) For $\mathfrak{e}\in(0,1), \omega\in\mathbb{U}\setminus\{1, -1\}$, we have $-1=\beta_{1}(\mathfrak{e},1)<\beta_{1}(\mathfrak{e},\omega)<\beta_{1}(\mathfrak{e},-1)<\beta_{2}(\mathfrak{e},-1)<\beta_{2}(\mathfrak{e},\omega)<\beta_{2}(\mathfrak{e},1)
=\beta_{3}(\mathfrak{e},1)=0.$
For $\mathfrak{e}=0$, 
$\partial_{\mathfrak{e}}\beta_{1}(0,\omega)=\partial_{\mathfrak{e}}\beta_{2}(0,\omega)=0$ for $\omega\in\mathbb{U}\setminus\{-1\}$ and $\partial_{\mathfrak{e}}\beta_{1}(0,-1)=-\partial_{\mathfrak{e}}\beta_{2}(0,-1)=-3/8$.

ii) For fixed $\omega\in\mathbb{U}$, when $\mathfrak{e}\rightarrow 1^-$, $\beta_{1}(\mathfrak{e},\omega)\rightarrow-1$ and $\beta_{2}(\mathfrak{e},\omega)\rightarrow 0.$

iii) For any $\mathfrak{e}\in[0,1)$, $\gamma_{\beta_{1}(\mathfrak{e},1),\mathfrak{e}}(2\pi)\approx N(1,-1),\, \gamma_{\beta_{2}(\mathfrak{e},1),\mathfrak{e}}(2\pi)=\gamma_{\beta_{3}(\mathfrak{e},1),\mathfrak{e}}(2\pi)\approx I_{2},$ and
$i_{1}(\gamma_{\beta_{1}(\mathfrak{e},1),\mathfrak{e}})=0,\, i_{1}(\gamma_{\beta_{2}(\mathfrak{e},1),\mathfrak{e}})=i_{1}(\gamma_{\beta_{3}(\mathfrak{e},1),\mathfrak{e}})=1.$

iv) $\gamma_{\beta,\mathfrak{e}}(2\pi)\approx R(\vartheta)$ for some $\vartheta\in(0,\pi)$ when $\beta_{1}(\mathfrak{e},1)<\beta<\beta_{1}(\mathfrak{e},-1)$ and $\vartheta\in(\pi,2\pi)$ when  $\beta_{2}(\mathfrak{e},-1)<\beta<\beta_{2}(\mathfrak{e},1)$. In this case, $\gamma_{\beta,\mathfrak{e}}(2\pi)$ is strongly linearly stable and $i_{1}(\gamma_{\beta,\mathfrak{e}})=1.$

v) $\gamma_{\beta,\mathfrak{e}}(2\pi)\approx D(\lambda)$ for some $0>\lambda\neq -1$ when $\beta_{1}(\mathfrak{e},-1)<\beta<\beta_{2}(\mathfrak{e},-1)$. In this case, $\gamma_{\beta,\mathfrak{e}}(2\pi)$ is hyperbolic and $i_{1}(\gamma_{\beta,\mathfrak{e}})=1$.

Here $\approx$ means symplectic similarity and $R(\vartheta), D(\lambda)$ and $N_{1}(1,a), a=\pm1$ is given in Appendix.
\end{thm}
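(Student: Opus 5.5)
The plan is to recast $\omega$-degeneracy of $\mathcal{A}(\beta,\fe)$ as a weighted Hill-type eigenvalue problem and then read off all the listed properties from Sturm--Liouville/Floquet theory. Throughout, the index relation (\ref{index equ1}) transfers statements about $\phi_\omega,v_\omega$ of $\mathcal{A}$ to $i_\omega,\nu_\omega$ of $\gamma_{\beta,\fe}$. Indeed, $\mathcal{A}(\beta,\fe)y=0$ is equivalent to
$$
(1+\fe\cos\theta)\,(-y''-y)=\beta y,
$$
which is self-adjoint on $L^2([0,2\pi],\frac{d\theta}{1+\fe\cos\theta})$ with the $\omega$-boundary condition and has discrete spectrum bounded below. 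This is exactly why $\beta_i(\fe,\omega)$ are well defined, ordered, and satisfy $\beta_i(\fe,\omega)=\beta_i(\fe,\bar\omega)$ (by complex conjugation).

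\textbf{Monotonicity.} I would compute $\partial_\beta\mathcal{A}(\beta,\fe)=-(1+\fe\cos\theta)^{-1}$, which is negative definite. Hence $\beta\mapsto\phi_\omega(\mathcal{A}(\beta,\fe))$ is non-decreasing. Moreover, crossing a degenerate value increases the Morse index exactly by the nullity (spectral flow with positive crossing form), which gives $\lim_{\epsilon\to0^+}i_\omega(\gamma_{\beta+\epsilon,\fe})=i_\omega+\nu_\omega$.

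\textbf{Explicit eigenvalues at $\beta=-1$ and $\beta=0$.} For $\beta=-1$, the function $y=1+\fe\cos\theta$ satisfies $-y''-y=-1$, so it is a $1$-eigenfunction with eigenvalue $-1$. Since it is positive, it is the principal periodic eigenfunction, so $\beta_1(\fe,1)=-1$ and $\phi_\omega(\mathcal{A}(-1,\fe))=0$ for all $\omega$. For $\beta=0$, both $\cos\theta$ and $\sin\theta$ are eigenfunctions, so $\beta_2(\fe,1)=\beta_3(\fe,1)=0$. The ordering chain in i) is then the classical Floquet interlacing for Hill equations,
$$
\beta_1(1)<\beta_1(\omega)<\beta_1(-1)\le\beta_2(-1)<\beta_2(\omega)<\beta_2(1)\le\beta_3(1),
$$
which I would obtain via the discriminant of the monodromy as a function of $\beta$, carried over to the weighted problem.

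\textbf{Strict inequality $\beta_1(\fe,-1)<\beta_2(\fe,-1)$.} This is the main obstacle: one must rule out coexistence of two independent $2\pi$-antiperiodic solutions for $\fe>0$. I would first do perturbation at $\fe=0$. There the eigenfunctions $\cos(\theta/2),\sin(\theta/2)$ share the eigenvalue $-3/4$. Degenerate first-order perturbation theory, with the weight $1+\fe\cos\theta$ acting on the $2\times2$ span, splits it into $-3/4\mp c\,\fe$. Evaluating the matrix elements $\int\cos\theta\cos^2(\theta/2)$ and $\int\cos\theta\sin^2(\theta/2)$ gives the stated slopes $\mp3/8$, and the same computation for $\omega\ne-1$ gives zero first-order terms. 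For general $\fe$, I would try to show that a double $-1$ eigenvalue would force $\gamma_{\beta,\fe}(2\pi)=-I$. This would contradict the analyticity of the discriminant in $(\beta,\fe)$ together with the nonzero splitting at $\fe=0$. A fallback is the argument in \cite{HOT23} using the $\theta\mapsto-\theta$ symmetry: it decomposes the antiperiodic problem into even and odd parts, whose eigenvalues are then compared separately by a Sturm comparison in $\fe$.

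\textbf{Remaining items.} For ii), as $\fe\to1^-$ the weight $1+\fe\cos\theta$ degenerates at $\theta=\pi$. Using test functions concentrated near $\theta=\pi$ in the Rayleigh quotient, together with the fixed values $-1$ and $0$ at $\omega=1$ and the interlacing chain, squeezes $\beta_1(\fe,\omega)\to-1$ and $\beta_2(\fe,\omega)\to0$. For iii)--v), I would use that $i_\omega$ is locally constant off the curves and jumps by $\nu_\omega$ on them, that $i_1=0$ below $\beta_1(\fe,1)$, and Bott-type splitting numbers. Then $\nu_1=1$ at $\beta_1(\fe,1)$ gives the normal form $N(1,-1)$, and $\nu_1=2$ at $\beta=0$ gives $I_2$. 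Between the curves, $i_1=1$, and the existence or absence of $\omega$-degeneracy for $\omega\in\mathbb{U}$ distinguishes the rotation normal form $R(\vartheta)$ (with $\vartheta$ located by the order of the crossings) from the hyperbolic normal form $D(\lambda)$ with $\lambda<0$.
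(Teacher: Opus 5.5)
The paper does not prove this statement; it imports it from \cite{HOT23,Z19}, so your proposal has to stand on its own. Much of it does:
\begin{itemize}
\item the weighted Hill reformulation;
\item the negative definiteness of $\partial_\beta\mathcal{A}(\beta,\mathfrak{e})=-(1+\mathfrak{e}\cos\theta)^{-1}$, which gives monotonicity and the jump by $\nu_\omega$;
\item the explicit eigenfunctions $1+\mathfrak{e}\cos\theta$, $\cos\theta$, $\sin\theta$ (you still need zero counting to place them as $\beta_1(\mathfrak{e},1)$ and $\beta_{2,3}(\mathfrak{e},1)$);
\item the non-strict Floquet chain and the slopes $\mp 3/8$ and $0$ at $\mathfrak{e}=0$;
\item the splitting-number bookkeeping for iii)--v). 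For example, $i_\omega(\gamma_{-1,\mathfrak{e}})=0$ for all $\omega$ forces $S^+=0$, hence $N_1(1,-1)$.
\end{itemize}
There are, however, two genuine gaps.

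\textbf{Strictness of $\beta_1(\mathfrak{e},-1)<\beta_2(\mathfrak{e},-1)$ for all $\mathfrak{e}\in(0,1)$.} You flag this correctly, but your argument does not close it. These two numbers are the first eigenvalues of the even branch ($y'(0)=y(\pi)=0$) and of the odd branch ($y(0)=y'(\pi)=0$) on $[0,\pi]$. Each branch is analytic in $\mathfrak{e}$, and the slopes $\mp3/8$ only show that the branches are not identical. Two distinct analytic branches can still cross at an isolated $\mathfrak{e}_*$ (a closed instability gap). Neither analyticity of the discriminant nor index theory forbids this: the monodromy would simply pass from $R(\pi^-)$ through $-I$ to $R(\pi^+)$, with all index jumps consistent. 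So ``$\gamma(2\pi)=-I$'' yields no contradiction.

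Your fallback names the right decomposition but not a usable comparison. Since $1/(1+\mathfrak{e}\cos\theta)$ is not monotone in $\mathfrak{e}$ pointwise, there is no Sturm comparison in $\mathfrak{e}$. Proving each branch monotone would need sign information on $\int_0^\pi \cos\theta\, y^2(1+\mathfrak{e}\cos\theta)^{-2}\,d\theta$ for the eigenfunctions, which you do not supply.

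One route that does work:
\begin{itemize}
\item With $x=\theta/2$ the equation becomes the Ince equation $(1+\mathfrak{e}\cos 2x)y''+4(1+\beta+\mathfrak{e}\cos 2x)y=0$.
\item Expanding even/odd antiperiodic solutions in $\cos kx$/$\sin kx$, $k$ odd, gives two three-term recurrences. They coincide except for the first diagonal entry, which is $4(1+\beta)-1\pm\frac{3\mathfrak{e}}{2}$. Their off-diagonal coefficients $\frac{\mathfrak{e}}{2}(4-j^2)$, $j$ odd, never vanish.
\item The Casoratian of two solutions of the common tail satisfies $D_m=\frac{m-4}{m+4}D_{m-2}$, so it decays only like $m^{-4}$ unless it is zero. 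Hence, up to scale, the tail has only one solution that can be the Fourier coefficients of a $W^{2,2}$ function.
\item A common eigenvalue would therefore force $3\mathfrak{e}A_1=0$, and then all coefficients vanish.
\end{itemize}
For the periodic chain the $j=2$ coefficient vanishes, which is exactly why the gap at $\beta=0$ is closed.

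\textbf{The limits in ii).} By the chain, it suffices to show $\beta_1(\mathfrak{e},-1)\to-1$, which needs an upper bound, and $\beta_2(\mathfrak{e},-1)\to 0$, which needs a lower bound.
\begin{itemize}
\item Rayleigh-quotient test functions give only upper bounds, so they cannot yield $\liminf_{\mathfrak{e}\to1^-}\beta_2(\mathfrak{e},-1)\ge 0$.
\item Functions supported in $|\theta-\pi|<\eta$ have quotient $\ge-(1-\mathfrak{e}\cos\eta)$, because the weight blows up there. So they do not help with $\beta_1$ either.
\end{itemize}
For the upper bound, use the antiperiodic function $(1+\cos\theta)\,\mathrm{sgn}(\pi-\theta)$, whose quotient tends to $-1$.

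For the lower bound, note that $\beta_2(\mathfrak{e},-1)$ dominates the first eigenvalue of the odd problem on $[0,\pi]$, and argue by compactness. Suppose $\int_0^\pi(y_n'^2-y_n^2)+\delta\int_0^\pi y_n^2(1+\mathfrak{e}_n\cos\theta)^{-1}<0$ with $y_n(0)=0$, $\|y_n\|_{L^2}=1$ and $\mathfrak{e}_n\to1$. A weak $W^{1,2}$ limit $y\neq0$ then satisfies $\int_0^\pi y^2(1+\cos\theta)^{-1}<\infty$ by Fatou, so $y(\pi)=0$. It also satisfies $\int_0^\pi(y'^2-y^2)+\delta\int_0^\pi y^2(1+\cos\theta)^{-1}\le 0$. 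This contradicts $\int_0^\pi y'^2\ge\int_0^\pi y^2$ for $y\in W^{1,2}_0(0,\pi)$.
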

Based on Theorem \ref{prop of degcurve}, we obtain further properties of the mean index $\hat{i}(\gamma_{\beta,\fe})$.
\begin{cor}\label{mean index monotonicity}
For $\fe=0$, the mean index satisfies $\hat{i}(\gamma_{\beta,0})=2\sqrt{1+\beta}$. For any fixed $\fe\in[0,1)$, the mean index is strictly increasing in $\beta\in(-1,0]\setminus[\beta_{1}(\fe,-1), \beta_{2}(\fe,-1)]$,
while $\hat{i}(\gamma_{\beta,\fe})=1$ for $\beta\in[\beta_{1}(\fe,-1), \beta_{2}(\fe,-1)]$, and $\hat{i}(\gamma_{\beta,\fe})=2$ for $\beta=\beta_{2}(\fe,1)$.
\end{cor}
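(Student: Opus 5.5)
The plan is to treat the three claims separately, relying throughout on the eigenvalue characterization of the mean index: if $\gamma_{\beta,\fe}(2\pi)\approx R(\vartheta)$ with $\vartheta\in[0,2\pi]$, then $\hat{i}(\gamma_{\beta,\fe})$ is computed from $i_1(\gamma_{\beta,\fe})$ and the rotation angle $\vartheta$. Equivalently, we can use the Bott-type formula $\hat{i}(\gamma)=\frac{1}{2\pi}\int_{\mathbb U} i_\omega(\gamma)\,d\omega$, obtained from $i_{\omega}$ of the iterates by averaging over $\omega\in\mathbb U$.

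\emph{Case $\fe=0$.} Here \eqref{eq;linearized equation} has constant coefficients $B=\mathrm{diag}(1,1+\beta)$. Since $\beta>-1$, the fundamental solution is a rotation with frequency $\sqrt{1+\beta}$, namely $\gamma_{\beta,0}(\theta)\approx R(\sqrt{1+\beta}\,\theta)$ after a constant symplectic rescaling. The mean index of a rotation path $R(\alpha\theta)$ on $[0,2\pi]$ is $2\alpha$, because $i_1$ of the $m$-th iterate counts the crossings of $m\cdot 2\pi\alpha$ through multiples of $2\pi$, contributing about $2m\alpha$. Using symplectic invariance, this gives $\hat{i}(\gamma_{\beta,0})=2\sqrt{1+\beta}$.

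\emph{Case of general $\fe$: the index function.} We write $\hat{i}(\gamma_{\beta,\fe})=\frac{1}{2\pi}\int_0^{2\pi} i_{e^{\sqrt{-1}s}}(\gamma_{\beta,\fe})\,ds$. By Theorem \ref{prop of degcurve}, each $i_\omega$ starts at $0$ at $\beta=-1$ and jumps by $1$ exactly at $\beta_1(\fe,\omega)$ and at $\beta_2(\fe,\omega)$. The jump value at $\beta_j$ itself is governed by the left/right limit statement. Hence $i_\omega(\gamma_{\beta,\fe})=\#\{j\in\{1,2\}:\beta_j(\fe,\omega)<\beta\}$, with the appropriate convention at equality. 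Substituting this into the integral,
\[
\hat{i}(\gamma_{\beta,\fe})=\frac{1}{2\pi}\Big|\{s:\beta_1(\fe,e^{\sqrt{-1}s})<\beta\}\Big|+\frac{1}{2\pi}\Big|\{s:\beta_2(\fe,e^{\sqrt{-1}s})<\beta\}\Big|.
\]

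\emph{Reading off the claims.} By item i), $\beta_1(\fe,\cdot)$ ranges over $[-1,\beta_1(\fe,-1)]$ and $\beta_2(\fe,\cdot)$ over $[\beta_2(\fe,-1),0]$.
\begin{itemize}
\item For $\beta\in[\beta_1(\fe,-1),\beta_2(\fe,-1)]$, the first measure is full (up to a null set) and the second is zero, so $\hat{i}=1$.
\item For $\beta=\beta_2(\fe,1)=0$, both measures are full, so $\hat{i}=2$.
\item For $\beta\in(-1,\beta_1(\fe,-1))$, strict monotonicity requires that $\{s:\beta_1(\fe,e^{\sqrt{-1}s})<\beta\}$ grow strictly with $\beta$, i.e., that $\beta_1(\fe,e^{\sqrt{-1}s})$ is not constant on any $s$-interval. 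The analogous statement is needed for $\beta_2$ on $(\beta_2(\fe,-1),0)$.
\end{itemize}

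The main obstacle is exactly this non-constancy. I would obtain it by tying monotonicity in $\omega$ to the rotation picture of item iv). For $\beta$ in the stable region, $\gamma_{\beta,\fe}(2\pi)\approx R(\vartheta)$, and $\omega=e^{\sqrt{-1}s}$ is degenerate only at $s=\pm\vartheta$. So $\beta_1(\fe,\omega)=\beta$ has a unique solution pair $\omega=e^{\pm\sqrt{-1}\vartheta(\beta)}$, and $\vartheta$ is determined by $\beta$. Therefore $\beta\mapsto\vartheta(\beta)$ is injective and continuous, hence strictly monotone, and then $\hat{i}$ is strictly increasing in $\beta$. Concretely, in the first stable region $\hat{i}=\vartheta/\pi$, and in the second $\hat{i}=2-(2\pi-\vartheta)/\pi\cdot$ appropriately, so $\hat{i}=\vartheta/\pi$ with $\vartheta\in(\pi,2\pi)$. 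Injectivity follows because at a fixed $\beta$ the nullity $\nu_\omega$ is nonzero only for the two conjugate eigenvalues of $R(\vartheta)$.
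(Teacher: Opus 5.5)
Your overall plan works, but it takes a different route from the paper's, and the strict-monotonicity step is argued incorrectly as written.

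\textbf{Comparison of routes.} The paper applies Long's normal-form formula \eqref{mean index 2} to the monodromy. Theorem~\ref{prop of degcurve} gives $i_1(\gamma_{\beta,\mathfrak{e}})=1$ on $(-1,0]$, $\gamma_{\beta,\mathfrak{e}}(2\pi)\approx R(\vartheta)$ on the two elliptic intervals, hyperbolic or $-1$-degenerate monodromy in between (``other cases'', so $\hat{i}=i_1=1$), and $I_2$ at $\beta=0$ (so $\hat{i}=i_1+1=2$). The strict increase of $\vartheta(\beta)$ is simply asserted there.

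You instead use the Bott-type average $\hat{i}=\frac{1}{2\pi}\int_0^{2\pi}i_{e^{\sqrt{-1}s}}\,ds$, together with the step structure $i_\omega(\gamma_{\beta,\mathfrak{e}})=\#\{j:\beta_j(\mathfrak{e},\omega)<\beta\}$ read off from the jump and monotonicity statements of the theorem. That is legitimate. It yields $\hat{i}=1$ on the closed interval $[\beta_1(\mathfrak{e},-1),\beta_2(\mathfrak{e},-1)]$ and $\hat{i}=2$ at $\beta=0$ from the orderings in item i) alone, without identifying any normal form. Your $\mathfrak{e}=0$ computation is also correct.

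\textbf{The problem is your last paragraph.} There are two issues.
\begin{itemize}
\item You misidentify the obstacle. For the distribution function $\beta\mapsto\frac{1}{2\pi}|\{s:\beta_1(\mathfrak{e},e^{\sqrt{-1}s})<\beta\}|$, non-constancy of $\beta_1(\mathfrak{e},\cdot)$ governs continuity, not strict increase. A level set of positive measure only produces an upward jump.
\item Your injectivity argument runs the wrong way. The fact that, at fixed $\beta$, $\nu_\omega\neq0$ only for $\omega=e^{\pm\sqrt{-1}\vartheta}$ shows only that $\vartheta(\beta)$ is well defined. Injectivity needs the converse: for fixed $\omega\neq\pm1$, the parameters with $\nu_\omega(\gamma_{\beta,\mathfrak{e}})\neq0$ are exactly the $\beta_i(\mathfrak{e},\omega)$. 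By item i), only $\beta_1(\mathfrak{e},\omega)$ lies in $(-1,\beta_1(\mathfrak{e},-1))$, and only $\beta_2(\mathfrak{e},\omega)$ lies in $(\beta_2(\mathfrak{e},-1),0)$. So $\vartheta(\beta)=\vartheta(\beta')$ forces $\beta=\beta'$. Continuity of $\vartheta$ then gives strict monotonicity, and the non-decrease of $\hat{i}$ (each $i_\omega$ is non-decreasing) fixes the direction.
\end{itemize}

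\textbf{A shorter fix.} Your own formula finishes the proof directly. Take $\beta<\beta'$ in $(-1,\beta_1(\mathfrak{e},-1))$. The map $\omega\mapsto\beta_1(\mathfrak{e},\omega)$ is continuous, equals $-1$ at $\omega=1$, and equals $\beta_1(\mathfrak{e},-1)$ at $\omega=-1$. Hence $\{s:\beta<\beta_1(\mathfrak{e},e^{\sqrt{-1}s})<\beta'\}$ is nonempty and open, so it has positive measure, and $\hat{i}(\gamma_{\beta',\mathfrak{e}})>\hat{i}(\gamma_{\beta,\mathfrak{e}})$. The same argument with $\beta_2(\mathfrak{e},\cdot)$ works on $(\beta_2(\mathfrak{e},-1),0]$.
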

\begin{proof}
It follows from the facts that $\gamma_{\beta,\mathfrak e}(2\pi)\approx R(\theta)$ for some $\theta\in (0,\pi)\cup(\pi,2\pi)$, where $\theta=\theta(\beta)$ is strictly increasing with $\beta$ on $(-1,\beta_1(\fe,-1))\cup(\beta_2(\fe,-1),0)$, and that the eigenvalues of $\gamma_{\beta,\mathfrak e}(2\pi)$ are invariant along the $\omega$-degenerate curves. The mean index can be computed using Theorem \ref{prop of degcurve} and mean index formula \eqref{mean index 2}.
\end{proof}
\subsection{Degenerate curves estimation}
In this section, we further establish the following propositions to estimate the location of the degenerate curves in the region $(\beta,\fe)\in(-1,0]\times(0,1)$. They will be useful for estimating the mean index.
\begin{prop}\label{lem;beta -1 to -3/4}
    The Morse index
    $$\phi_\omega(\mathcal{A}(\beta_{1}(0,\omega),\fe))\geq1\ \ \text{and}\ \ \beta_{1}(\fe,\omega)<
    \beta_{1}(0,\omega).
    $$
\end{prop}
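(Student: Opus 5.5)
The plan is to reduce both assertions to one strict inequality for a weighted Rayleigh quotient at the level $\beta_*:=\beta_1(0,\omega)$, and then to prove that inequality with an explicit test function.

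\emph{Step 1: compute $\beta_*$.} Write $\omega=e^{\sqrt{-1}\alpha}$ with $\alpha\in[0,2\pi)$. At $\mathfrak e=0$ the operator $-\frac{d^2}{d\theta^2}-1$ on $\bar D(\omega,2\pi)$ is diagonalised by $e^{\sqrt{-1}(k+\alpha/2\pi)\theta}$, $k\in\mathbb Z$. Hence $\beta_*=\nu^2-1$, where $\nu=\min_k|k+\alpha/2\pi|\in[0,\tfrac12]$, and in particular $\beta_*\le-\tfrac34<0$.

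\emph{Step 2: reduce to a quotient inequality.} Put $w(\theta)=(1+\mathfrak e\cos\theta)^{-1}>0$. The numbers $\beta_i(\mathfrak e,\omega)$ are the eigenvalues of the weighted problem $-y''-y=\beta wy$ on $\bar D(\omega,2\pi)$. Since $w>0$, $\beta\mapsto\mathcal A(\beta,\mathfrak e)$ is strictly decreasing in the form sense. So $\phi_\omega(\mathcal A(\beta,\mathfrak e))$ equals the number of $\beta_i(\mathfrak e,\omega)$ strictly less than $\beta$; this is the monotonicity recorded in Theorem \ref{prop of degcurve}, made quantitative by min--max. Both claims therefore reduce to $\beta_1(\mathfrak e,\omega)<\beta_*$, and by
\[
\beta_1(\mathfrak e,\omega)=\min_{y}\frac{\int_0^{2\pi}(|y'|^2-|y|^2)\,d\theta}{\int_0^{2\pi}w|y|^2\,d\theta}
\]
it suffices to find one $y\in\bar D(\omega,2\pi)$ with
\[
Q(y):=\int_0^{2\pi}\big(|y'|^2-|y|^2-\beta_*w|y|^2\big)\,d\theta<0 .
\]

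\emph{Step 3: choose the test function.} Take $y=e^{\sqrt{-1}\nu\theta}f(\theta)$ with $f$ real and $2\pi$-periodic; this is compatible with the $\omega$-condition. Then
\[
Q(y)=\int_0^{2\pi} f'^2\,d\theta+\beta_*\int_0^{2\pi} f^2(1-w)\,d\theta,\qquad 1-w=\frac{\mathfrak e\cos\theta}{1+\mathfrak e\cos\theta}.
\]
The naive choice $f\equiv1$ fails. In that case $\int(1-w)=2\pi\big(1-(1-\mathfrak e^2)^{-1/2}\big)<0$, so $Q>0$ because $\beta_*<0$. We therefore need $f$ weighted toward $\{\cos\theta>0\}$ strongly enough that $|\beta_*|\int f^2(1-w)$ beats $\int f'^2$. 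I would first try the one-parameter family $f=(1+\mathfrak e\cos\theta)^s$. With it, $\int f^2(1-w)=\int\big[(1+\mathfrak e\cos\theta)^{2s}-(1+\mathfrak e\cos\theta)^{2s-1}\big]$, which vanishes at $s=\tfrac12$ and is positive for $s>\tfrac12$. Meanwhile $\int f'^2=s^2\mathfrak e^2\int\sin^2\theta\,(1+\mathfrak e\cos\theta)^{2s-2}$. I would expand both in $\mathfrak e$ and optimise over $s$, and also try localized bumps supported where $\cos\theta>0$.

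\emph{Main obstacle and fallback.} The main obstacle is that the test function must work uniformly for $\mathfrak e\in(0,1)$ and all $\nu\in[0,\tfrac12]$. The margin is thin: by Theorem \ref{prop of degcurve}(i), $\partial_{\mathfrak e}\beta_1(0,\omega)=0$ for $\omega\ne-1$, so the decrease of $\beta_1$ is only second order near $\mathfrak e=0$. Quick checks with $f=(1+\mathfrak e\cos\theta)^s$ at $s=1$ give $Q=\pi\mathfrak e^2(1+\beta_*)>0$, so that choice fails. If no explicit $f$ works globally, I would argue by continuity and contradiction instead. Near $\mathfrak e=0$, second-order Rayleigh--Schr\"odinger perturbation of the simple eigenvalue $\nu^2-1$, using the $\cos\theta$ coupling to the neighbouring modes, should show $\partial_{\mathfrak e}^2\beta_1(0,\omega)<0$; the resonant cases $\nu=0$ and $\nu=\tfrac12$ would be handled separately. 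Then suppose $\beta_1(\mathfrak e_0,\omega)=\beta_*$ for some $\mathfrak e_0\in(0,1)$. I would try to rule this out using the corresponding kernel element together with Theorem \ref{prop of degcurve}(ii), namely $\beta_1(\mathfrak e,\omega)\to-1<\beta_*$ as $\mathfrak e\to1^-$.
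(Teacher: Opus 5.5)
You have a genuine gap. Your Step 2 reduction to finding one $y$ with $Q(y)<0$ is correct, but Step 3 cannot succeed. No test function $y=e^{i\nu\theta}f$ with $f$ real and $2\pi$-periodic ever gives $Q(y)<0$, so optimising over $(1+\fe\cos\theta)^s$ or over real bumps is hopeless. To see this, note that $y_0=1+\fe\cos\theta$ solves $-y_0''=(1-w)y_0$. Writing $f=y_0g$ and integrating by parts gives $\int_0^{2\pi}\big(f'^2-(1-w)f^2\big)\,d\theta=\int_0^{2\pi}y_0^2g'^2\,d\theta$. Splitting $1=\nu^2+(1-\nu^2)$ in front of $\int f'^2$ then yields
\[
Q(y)=\nu^2\int_0^{2\pi}f'^2\,d\theta+(1-\nu^2)\int_0^{2\pi}(1+\fe\cos\theta)^2\Big(\Big(\frac{f}{1+\fe\cos\theta}\Big)'\Big)^2d\theta ,
\]
which is strictly positive for $f\not\equiv0$, $\nu\in(0,\tfrac12]$, $\fe\in(0,1)$. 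This is exactly what your two checks ($f\equiv1$ and $s=1$) produced.

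The same identity shows that at $\nu=0$ ($\omega=1$) the function $y_0$ is an honest kernel element and $\beta_1(\fe,1)\equiv-1$. So the strict inequality is false there, and the statement must be read for $\omega\neq1$, i.e. $\nu\in(0,\tfrac12]$; your Step 1 does not exclude this case.

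Your fallback does not close the gap either. Second-order perturbation only controls small $\fe$. Saying you would rule out $\beta_1(\fe_0,\omega)=\beta_*$ ``using the kernel element'' restates the claim without giving a mechanism. Remark \ref{rem1} explains why the natural concavity argument fails precisely for $\Gamma_1$.

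The missing idea is that the test function must be genuinely complex. For $y=e^{i\nu\theta}g$ with $g=\sum_n c_ne^{in\theta}$, one has $\int_0^{2\pi}|y'|^2d\theta=2\pi\sum_n(n+\nu)^2|c_n|^2$. Its cross term $4\pi\nu\sum_n n|c_n|^2$ is negative when $g$ is weighted toward negative frequencies, whereas a real $f$ forces $|c_n|=|c_{-n}|$ and kills it.

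The paper uses the Rayleigh--Ritz space $\mathrm{span}\{e^{i(\nu-1)\theta},e^{i\nu\theta},e^{i(\nu+1)\theta}\}$. It computes the Gram matrix of $\mathcal A(\beta_*,\fe)$ exactly from $\int_0^{2\pi}e^{in\theta}(1+\fe\cos\theta)^{-1}d\theta=2\pi s^{|n|}/\sqrt{1-\fe^2}$, where $s=(\sqrt{1-\fe^2}-1)/\fe$. The determinant is a positive multiple of $s\fe(1-\fe^2)\big(1-\frac{1}{1-\nu^2}\big)\big(s^2-\frac{3}{1-\nu^2}\big)$, which is negative for every $\fe\in(0,1)$ and $\nu\in(0,\tfrac12]$. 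This gives a negative direction uniformly in $\fe$, with no perturbative or continuity argument. All three modes are needed: for small $\fe$, the $2\times2$ block on $\{e^{i(\nu-1)\theta},e^{i\nu\theta}\}$ is still positive definite when $\nu<2-\sqrt3$.
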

\begin{proof}
For $\beta\in(-1,-3/4]$, let $\omega=e^{2\pi i\nu},\,\nu=\sqrt{1+\beta}\in(0,1/2]$, by direct computation, we have $\beta_{1}(0,\omega)=\beta$. Now consider the Fourier basis
	$\left\lbrace u_n(\theta)=e^{i(n+\nu)\theta}|n\in\mathbb{Z}\right\rbrace
	$
and let $\mathfrak{D}=\text{span}\{u_{-1}(\theta), u_{0}(\theta), u_{1}(\theta)\}$, which is a subspace of $\bar{D}(\omega,2\pi)$.
Direct computation yields
$$(\left<\mathcal{A}(\beta,\fe)u_k,u_l\right>)_{-1\leq k,l\leq 1}:=\frac{2\pi(1-\nu^2)}{\sqrt{1-\fe^2}}\begin{pmatrix}
	\lambda_1 & s & s^2 \\
	s & \lambda_0 & s \\
	s^2 & s & \lambda_{-1}\\
\end{pmatrix},
$$ where
$\lambda_1=\frac{\sqrt{1-\fe^2}}{1-\nu^2}(\nu^2+2\nu)+1$,
$\lambda_{-1}=\frac{\sqrt{1-\fe^2}}{1-\nu^2}(\nu^2-2\nu)+1$
and $\lambda_0=-\fe s$ with
$s=\frac{-1+\sqrt{1-\fe^2}}{\fe}\in(-1,0)$.
Direct computation shows that
	\begin{equation}
		\begin{aligned}
			\det \begin{pmatrix}
	\lambda_1 & s & s^2 \\
	s & \lambda_0 & s \\
	s^2 & s & \lambda_{-1}\\
\end{pmatrix}&=-s\left|\begin{array}{ccc}
				1 & -\fe & 1\\
				\lambda_1 & s & s^2\\
				s^2 & s & \lambda_{-1}
			\end{array}\right|
			=-s\left((\lambda_1+\lambda_{-1}-2)s+(\lambda_1\lambda_{-1}-1)\fe\right)\\
			&=s\sqrt{1-\fe^2}(1-1/(1-\nu^2))(2s+2\fe-\fe\sqrt{1-\fe^2}-3\fe\sqrt{1-\fe^2}/(1-\nu^2))\\
			&=s\fe(1-\fe^2)(1-1/(1-\nu^2))(s^2-3/(1-\nu^2))<0,
		\end{aligned}\nonumber
	\end{equation}
where the last equality uses the identity $2s+2\fe-\fe\sqrt{1-\fe^2}=\fe\sqrt{1-\fe^2}s^2$.
Therefore, there exists $u(\theta)\in \mathfrak{D}$ such that $\langle\mathcal{A}(\beta,\fe)u(\theta),u(\theta)\rangle<0$, which implies $\phi_\omega(\mathcal{A}(\beta_1(0,\omega),\fe))\geq1$ for any $\fe\in(0,1)$. Moreover,
from (\ref{index equ1}), Theorem \ref{prop of degcurve} and Corollary \ref{mean index monotonicity}, we know that $\phi_\omega(\mathcal{A}(-1,\fe))=0$, $\beta\mapsto \phi_\omega(\mathcal{A}(\beta,\fe))$ is non-decreasing, and  it is strictly increasing only when $\beta$ crosses $\beta_{j}(\mathfrak{e},\omega),j=1,2$. Hence
$\phi_\omega(\mathcal{A}(\beta,\fe))=0$ for $\beta\in(-1,\beta_{1}(\fe,\omega)]$. This, combined with $\phi_\omega(\mathcal{A}(\beta_1(0,\omega),\fe))\geq1$, implies
$\beta_{1}(\fe,\omega)<\beta_{1}(0,\omega)$ for any $\fe\in(0,1)$.
\end{proof}
\begin{prop}\label{prop: the second derivitive}
	For every $\omega\in \mathbb{U}\backslash\{\pm1\}$, we have $\partial_{\fe}^2\beta_2(0,\omega)=-3\beta_2(0,\omega)\cdot \frac{1+\beta_2(0,\omega)}{3+4\beta_2(0,\omega)}>0.$
\end{prop}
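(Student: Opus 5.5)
The plan is to treat $\beta_2(\fe,\omega)$ as an eigenvalue of a generalized eigenvalue problem that depends analytically on $\fe$, and to compute its second derivative at $\fe=0$ by Rayleigh--Schr\"odinger perturbation theory.

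\textbf{Setting up the unperturbed problem.} Write $\omega=e^{2\pi i\nu}$ with $\nu\in(0,1)\setminus\{1/2\}$, which is possible since $\omega\neq\pm1$. By definition, $\mathcal{A}(\beta,\fe)$ is $\omega$-degenerate exactly when there is $u\in\bar D(\omega,2\pi)$, $u\neq0$, with
\[
\Big(-\frac{d^2}{d\theta^2}-1\Big)u=\beta\,B_\fe u,\qquad B_\fe=(1+\fe\cos\theta)^{-1}.
\]
At $\fe=0$ the Fourier modes $e^{ik\theta}$, $k\in\nu+\mathbb{Z}$, diagonalize the problem, with eigenvalues $k^2-1$. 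The values lying in $(-1,0]$ are $\nu^2-1$ and $(1-\nu)^2-1$. Since $\nu\neq1/2$ these are distinct, and the larger one is $\beta_2(0,\omega)=\kappa^2-1$ with $\kappa=\max(\nu,1-\nu)\in(1/2,1)$.

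\textbf{Analyticity of the branch.} The eigenvalue $\beta_0:=\beta_2(0,\omega)$ is simple at $\fe=0$. The pencil $-d^2/d\theta^2-1-\beta B_\fe$ is a self-adjoint holomorphic family in $\fe$, and $B_\fe$ is positive and bounded. Kato's analytic perturbation theory therefore gives a real-analytic branch $\beta(\fe)$ with $\beta(0)=\beta_0$ and an analytic eigenfunction $u(\fe)$. For $\fe$ small this branch is still the second eigenvalue, by continuity and the ordering in Theorem \ref{prop of degcurve}, so it coincides with $\beta_2(\fe,\omega)$.

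\textbf{Expansion.} Put $u_0=e^{ik\theta}$ with $k=\pm\kappa$ the appropriate element of $\nu+\mathbb{Z}$, and $c=\cos\theta$. Expand
\[
B_\fe=1-\fe c+\fe^2c^2+O(\fe^3),\qquad u=u_0+\fe u_1+\fe^2u_2+\cdots,\qquad \beta=\beta_0+\fe b_1+\fe^2 b_2+\cdots.
\]
\begin{itemize}
\item At order $\fe$: projecting onto $u_0$ gives $b_1=0$, because $\langle cu_0,u_0\rangle=0$. This agrees with $\partial_\fe\beta_2(0,\omega)=0$ in Theorem \ref{prop of degcurve}. The order-$\fe$ equation is then
\[
(A_0-\beta_0)u_1=-\beta_0 c u_0,\qquad A_0=-\frac{d^2}{d\theta^2}-1.
\]
Since $cu_0=\tfrac12\big(e^{i(k+1)\theta}+e^{i(k-1)\theta}\big)$ and $(A_0-\beta_0)e^{i(k\pm1)\theta}=(1\pm2k)e^{i(k\pm1)\theta}$, we get
\[
u_1=-\frac{\beta_0}{2}\Big(\frac{e^{i(k+1)\theta}}{1+2k}+\frac{e^{i(k-1)\theta}}{1-2k}\Big).
\]
These denominators are nonzero because $2k\neq\pm1$, which again uses $\nu\neq1/2$.
\item At order $\fe^2$: projecting onto $u_0$ and using $\langle c^2u_0,u_0\rangle/\|u_0\|^2=1/2$ gives
\[
b_2=\beta_0\frac{\langle cu_1,u_0\rangle}{\|u_0\|^2}-\frac{\beta_0}{2}
=-\frac{\beta_0^2}{2(1-4k^2)}-\frac{\beta_0}{2}.
\]
\end{itemize}

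\textbf{Simplifying and sign.} We have $\partial_\fe^2\beta_2(0,\omega)=2b_2=-\beta_0\big(\beta_0+1-4k^2\big)/(1-4k^2)$. Substituting $k^2=1+\beta_0$ turns this into $-3\beta_0(1+\beta_0)/(3+4\beta_0)$, which is the claimed formula. For positivity, $\kappa>1/2$ gives $\beta_0\in(-3/4,0)$. Hence $-\beta_0>0$, $1+\beta_0>0$ and $3+4\beta_0>0$, so the expression is positive.

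\textbf{Main obstacle.} The delicate step is justifying the analytic-perturbation framework for the weighted problem with quasi-periodic boundary condition. One clean route is to conjugate by $B_\fe^{1/2}$, or to work in the weighted inner product $\langle B_\fe\cdot,\cdot\rangle$. In either setting the problem becomes a standard self-adjoint holomorphic family of type (A), and the second-order formula follows from the Feynman--Hellmann-type projections carried out above. The bookkeeping of which Fourier mode realizes $\beta_2$ (as opposed to $\beta_1$) is exactly what guarantees $3+4\beta_0>0$.
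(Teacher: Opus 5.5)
Your proposal is correct and is essentially the paper's argument. Both compute the second-order perturbation of the simple eigenvalue $\beta_2(0,\omega)$ from the Fourier mode $e^{ik\theta}$, using the same first-order corrector (your $u_1$ is the paper's $y_0$); the only difference is that the paper differentiates its Hellmann--Feynman quotient for $\partial_{\fe}\beta_2$ once more at $\fe=0$, whereas you project the order-$\fe^2$ equation onto $u_0$. Your version is slightly more careful on two points: you justify the analyticity of the branch, and you allow $k=\pm\kappa$, whereas the paper tacitly takes $x_0=e^{i\sqrt{1+\beta_2(0,\omega)}\theta}$, which is the correct mode only for $\nu\in(1/2,1)$ and otherwise relies on the symmetry $\beta_i(\fe,\omega)=\beta_i(\fe,\bar\omega)$.
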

\begin{proof}
	Assume $x_{\fe}$ satisfies $\mathcal{A}(\beta_2(\fe,\omega),\fe)x_{\fe}=0$, which is equivalent to $$-\ddot{x}_{\fe}-x_{\fe}-\beta_2(\fe,\omega)x_{\fe}=\fe\cos\theta(\ddot{x}_{\fe}+x_{\fe}).
	$$
	When $\fe=0$, we easily know $x_0=e^{i\sqrt{1+\beta_2(0,\omega)}\theta}$ is the base of the solution space. By Theorem \ref{prop of degcurve}-$i)$, $\partial_{\fe}\beta_{2}(0,\omega)=0$
	for  $\omega\in \mathbb{U}\backslash\{\pm1\}$. Taking the derivative of the above equation with respect to $\mathfrak e$ at $\mathfrak e=0$ and denoting $\partial_{\fe}x_{\fe}|_{\fe=0}$ by $y_0$, we have
	$$-\ddot{y}_0-y_0-\beta_2(0,\omega) y_0=\cos\theta(\ddot{x}_0+x_0) =-\frac{\beta_2(0,\omega)}{2}\left(e^{i(\sqrt{1+\beta_2(0,\omega)}+1)\theta}+e^{i(\sqrt{1+\beta_2(0,\omega)}-1)\theta}\right).
	$$
	By solving the equation above, we obtain that
	$$y_0=\mu_1e^{i(\sqrt{1+\beta_2(0,\omega)}+1)\theta}+\mu_{-1}e^{i(\sqrt{1+\beta_2(0,\omega)}-1)\theta}+\mu_0e^{i\sqrt{1+\beta_2(0,\omega)}\theta},$$
	where $2\mu_{\pm 1}=-\beta_2(0,\omega)((\sqrt{1+\beta_2(0,\omega)}\pm 1)^2-(1+\beta_2(0,\omega)))^{-1}$ and $\mu_0$ is any constant.
		
	Taking the derivative of $\langle\mathcal{A}(\beta_2(\fe,\omega),\fe)x_{\fe},x_{\fe}\rangle=0$ in $\fe$ and combining with the fact that $\mathcal{A}(\beta_2(\fe,\omega),\fe)$ is a self-adjoint operator, we have
	$$\partial_{\mathfrak{e}}\beta_2(\fe,\omega)
	=\frac{\int_{0}^{2\pi}\cos\theta (\ddot{x}_{\mathfrak{e}}+x_{\mathfrak{e}})(\overline{\ddot{x}_{\mathfrak{e}}
			+x_{\mathfrak{e}}})d\theta}{\beta_2(\fe,\omega)\int_{0}^{2\pi}|x_{\mathfrak{e}}|^2(1+\mathfrak{e}\cos\theta)^{-1}d\theta}
	=\frac{\int_{0}^{2\pi}\cos\theta(\ddot{x}_{\mathfrak{e}}+x_{\mathfrak{e}})(\overline{\ddot{x}_{\mathfrak{e}}+x_{\mathfrak{e}}})d\theta}{-\int_{0}^{2\pi}(\ddot{x}_{\mathfrak{e}}
		+x_{\mathfrak{e}})\bar{x}_{\mathfrak{e}}d\theta}.
	$$
	Then we obtain
	$$\begin{aligned}
		\partial^2_{\fe}\beta_2(0,\omega)&=\frac{2\int_{0}^{2\pi}\cos\theta\cdot \mathrm{Re}((\ddot{y}_0+y_0)(\overline{\ddot{x}_0+x_0}))d\theta}{-\int_{0}^{2\pi}(\ddot{x}_0+x_0)\bar{x}_0d\theta}\\
		&=\frac{-3(1+\beta_2(0,\omega))\beta_2(0,\omega)^2\int_0^{2\pi}2\cos^2\theta d\theta}{(3+4\beta_2(0,\omega))\cdot 2\pi \beta_2(0,\omega)} =-3\beta_2(0,\omega)\frac{1+\beta_2(0,\omega)}{3+4\beta_2(0,\omega)}.
	\end{aligned}$$
	Here we used
	$$\begin{aligned}
		\mathrm{Re}((\ddot{y}_0+y_0)(\overline{\ddot{x}_0+x_0}))&=\beta_2(0,\omega)^2\mathrm{Re}\left(\mu_1e^{i\theta}+\mu_{-1}e^{-i\theta}+\mu_0-\cos\theta\right)\\
		&=\beta_2(0,\omega)^2\left((\mu_{1}+\mu_{-1}-1)\cos\theta+\mu_0\right)\\
		&=\beta_2(0,\omega)^2\left(-\frac{3(1+\beta_2(0,\omega))}{3+4\beta_2(0,\omega)}\cos\theta+\mu_0\right).
	\end{aligned}$$
	Direct computation yields $-3/4<\beta_2(0,\omega)<0$ for $\omega\in\mathbb{U}\backslash\{\pm1\}$, so $\partial_{\fe}^2\beta_2(0,\omega)>0$.
\end{proof}

\begin{prop}\label{pro;beta -3/4 to 0}
    For $\omega\neq1$, $\beta_2(\fe,\omega)$ is increasing with respect to $\fe$.
\end{prop}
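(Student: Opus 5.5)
The plan is to show $\partial_{\fe}\beta_2(\fe,\omega)>0$ on $(0,1)$ for every $\omega\in\mathbb{U}\setminus\{1\}$. I would combine the local information at $\fe=0$ with a global argument that rules out interior critical points of $\fe\mapsto\beta_2(\fe,\omega)$.

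\emph{Local behaviour and the limit at $\fe\to1^-$.} Near $\fe=0$ the claim follows from what we already have. For $\omega\neq\pm1$, Theorem \ref{prop of degcurve}-$i)$ gives $\partial_{\fe}\beta_2(0,\omega)=0$, and Proposition \ref{prop: the second derivitive} gives $\partial^2_{\fe}\beta_2(0,\omega)>0$, so $\beta_2(\cdot,\omega)$ is strictly increasing on some $(0,\delta)$. For $\omega=-1$, Theorem \ref{prop of degcurve}-$i)$ gives $\partial_{\fe}\beta_2(0,-1)=3/8>0$ directly. At the other end, Theorem \ref{prop of degcurve}-$ii)$ gives $\beta_2(\fe,\omega)\to0$ as $\fe\to1^-$, while $\beta_2(0,\omega)<0$ for $\omega\neq1$. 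So the curve starts increasing, ends at its supremum $0$, and it remains to exclude interior local extrema.

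\emph{Excluding interior extrema.} I would argue by contradiction and work at a fixed level $\beta_*\in(\beta_2(0,\omega),0)$. If $\beta_2(\cdot,\omega)$ were not monotone, some level $\beta_*$ would be attained at two distinct values $\fe_1<\fe_2$ with a sign change of $\beta_*-\beta_2(\fe,\omega)$ in between. I would track the $\omega$-Morse index $\phi_\omega(\mathcal{A}(\beta_*,\fe))$ as a function of $\fe$. By Theorem \ref{prop of degcurve}, for $\beta_*$ in the relevant range (above $\beta_1(\fe,\omega)$, below $0$), this index is $1$ or $2$ according as $\beta_*\le\beta_2(\fe,\omega)$ or $\beta_*>\beta_2(\fe,\omega)$. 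Non-monotonicity therefore means the index jumps down and back up as $\fe$ increases.

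To rule this out I would show that $\fe\mapsto\phi_\omega(\mathcal{A}(\beta_*,\fe))$ is non-increasing. This amounts to showing that every crossing in $\fe$ at fixed $\beta_*$ has a definite sign, i.e.\ that $\partial_{\fe}\beta_2$ has a fixed sign wherever it vanishes cannot happen. I would use the derivative formula from the proof of Proposition \ref{prop: the second derivitive}:
$$\partial_{\fe}\beta_2=\frac{\int_0^{2\pi}\cos\theta\,|\ddot x_{\fe}+x_{\fe}|^2\,d\theta}{-\int_0^{2\pi}(\ddot x_{\fe}+x_{\fe})\bar x_{\fe}\,d\theta}.$$
Here the denominator equals $-\beta_2\int|x_{\fe}|^2(1+\fe\cos\theta)^{-1}d\theta>0$, so only the sign of the numerator matters. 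Writing $y=\ddot x_{\fe}+x_{\fe}=-\beta_2 x_{\fe}/(1+\fe\cos\theta)$, the numerator becomes $\beta_2^2\int\cos\theta\,|x_{\fe}|^2(1+\fe\cos\theta)^{-2}d\theta$. The task is then to show this is positive for the second eigenfunction.

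\emph{Main obstacle: the sign of the numerator.} I expect proving this positivity to be the main difficulty. My first attempt would be the substitution $x=(1+\fe\cos\theta)^{1/2}w$, or the inverse of the transformation $\mathcal{R}(\theta)$, returning to the time variable $t$ along the Kepler orbit. The aim is to rewrite the integral as a quantity whose sign is forced by the ordering of eigenvalues in Theorem \ref{prop of degcurve}-$i)$. The mechanism I would try to exploit is that the second eigenfunction, lying above $\beta_1(\fe,-1)$ and below $\beta_2(\fe,1)=0$, concentrates its modulus near pericenter $\theta=0$, where $\cos\theta>0$.

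If that direct route fails, I would fall back on a continuity argument. Suppose $\fe_0$ is the first point with $\partial_{\fe}\beta_2(\fe_0,\omega)=0$. Using the symmetry $\beta_2(\fe,\omega)=\beta_2(\fe,\bar\omega)$ and the strict ordering $\beta_2(\fe,-1)<\beta_2(\fe,\omega)<\beta_2(\fe,1)=0$, I would try to show that such a tangency forces a collision of $\Gamma_2(\omega)$ with $\Gamma_2(-1)$ or with the line $\beta=0$. That would contradict Theorem \ref{prop of degcurve}-$i)$.
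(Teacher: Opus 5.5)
\textbf{There is a genuine gap in the proposal.} You reformulate the statement correctly: for fixed $\beta_*\in(\beta_2(0,\omega),0)$ you need $\fe\mapsto\phi_\omega(\mathcal{A}(\beta_*,\fe))$ to be non-increasing. But that is the entire content of the proposition, and you give no argument for it. Reducing it to the sign of the numerator in the derivative formula is circular, because by that formula this sign \emph{is} the sign of $\partial_\fe\beta_2$. You offer no mechanism for controlling it for the actual eigenfunction at arbitrary $\fe\in(0,1)$.

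The sign bookkeeping is also reversed. Since $\ddot x_\fe+x_\fe=-\beta_2x_\fe/(1+\fe\cos\theta)$, the denominator is $-\int_0^{2\pi}(\ddot x_\fe+x_\fe)\bar x_\fe\,d\theta=\beta_2\int_0^{2\pi}|x_\fe|^2(1+\fe\cos\theta)^{-1}d\theta<0$. It is not $-\beta_2\int\cdots>0$. Hence $\partial_\fe\beta_2>0$ requires $\int_0^{2\pi}\cos\theta\,|x_\fe|^2(1+\fe\cos\theta)^{-2}d\theta<0$, i.e.\ the eigenfunction must be weighted toward apocenter $\theta=\pi$. Already at $\fe=0$, $\omega=-1$, the branch with slope $+3/8$ has eigenfunction $\sin(\theta/2)$, with $|x|^2=(1-\cos\theta)/2$. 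So the positivity you set out to prove is false, and the pericenter heuristic points the wrong way.

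The fallback is not an argument either. Nothing in the ordering of Theorem \ref{prop of degcurve}-$i)$ prevents $\beta_2(\cdot,\omega)$ from having interior local extrema while staying strictly between $\beta_2(\fe,-1)$ and $0$. A tangency therefore does not force any collision.

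The missing idea in the paper is to pass to a dual operator that depends affinely on $\fe$. Write $\mathcal{A}(\beta_*,\fe)=\mathcal{A}_0+\mathcal{B}_\fe$ with $\mathcal{A}_0=-d^2/d\theta^2-1$ and $\mathcal{B}_\fe=|\beta_*|/(1+\fe\cos\theta)>0$. Here $\mathcal{A}_0$ is invertible on $\bar D(\omega,2\pi)$ for $\omega\neq1$, with $m^-(\mathcal{A}_0)=2$. The relative Morse index identities of Proposition \ref{relat index} give
\[
m^-(\mathcal{A}_0+\mathcal{B}_\fe)-2=-m^-(\mathcal{B}_\fe^{-1}+\mathcal{A}_0^{-1}).
\]
Moreover $\mathcal{B}_\fe^{-1}+\mathcal{A}_0^{-1}=\mathcal{C}+\fe\mathcal{D}$, with $\mathcal{C}=\mathcal{A}_0^{-1}+1/|\beta_*|$ and $\mathcal{D}=\cos\theta/|\beta_*|$.

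The argument then runs as follows.
\begin{enumerate}
\item Since $m^-(\mathcal{A}_0+\mathcal{B}_\fe)\ge1$ by Proposition \ref{lem;beta -1 to -3/4}, only the lowest eigenvalue of $\mathcal{C}+\fe\mathcal{D}$ can ever be negative.
\item This eigenvalue is positive at $\fe=0$, because $m^-(\mathcal{A}_0+\mathcal{B}_0)=2$.
\item It is negative near $\fe=1$, because $m^-(\mathcal{A}_0+\mathcal{B}_\fe)=1$ there.
\item It is an infimum of Rayleigh quotients affine in $\fe$, hence concave in $\fe$, so it crosses zero exactly once.
\end{enumerate}
Degeneracy of $\mathcal{A}_0+\mathcal{B}_\fe$ and of $\mathcal{C}+\fe\mathcal{D}$ coincide. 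So $\Gamma_2(\omega)$ meets each line $\beta=\beta_*$ exactly once, which is the monotonicity. Nothing in your plan substitutes for this concavity step.
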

\begin{proof}
Consider the operator $\mathcal{A}(\beta,\fe)$ with domain $\bar{D}(\omega,2\pi)$. For $\beta\in(-1,0]$, define
$$\mathcal{A}_0=-\frac{d^2}{d\theta^2}-1,\ \ \mathcal{B}_{\fe}=\frac{|\beta|}{1+\fe\cos\theta},$$ so that $\mathcal{A}(\beta,\fe)=\mathcal{A}_0+\mathcal{B}_{\fe}$.
From Theorem \ref{prop of degcurve}-$i)$, we have $\partial_{\mathfrak{e}}\beta_{2}(0,-1)=3/8$, which implies that $\beta_{2}(\fe,-1)\in(-3/4,0)$ when $\fe>0$ is sufficiently small. Proposition \ref{prop: the second derivitive} implies that $\beta_{2}(\fe,\omega)\in(\beta_2(0,\omega),0)$ for $\omega\neq\pm1$ and sufficiently small $\fe>0$. From Theorem \ref{prop of degcurve}-$ii)$, we have $\lim_{\fe\rightarrow1^-}\beta_{2}(\fe,\omega)=0$ for all $\omega\in\mathbb{U}$. So we only need consider $\beta\in(\beta_{2}(0,\omega),0)$, then
both $\mathcal{B}_{0}^{-1}+\mathcal{A}^{-1}_0$ and $\mathcal{B}_{\fe}^{-1}+\mathcal{A}^{-1}_0$  are non-degenerate in $\bar{D}(\omega,2\pi)$ when $\fe$ is close to $1$.
Based on Proposition \ref{relat index}-$(c)$ in Appendix, we have
\bea\label{rela1}
I(\mathcal{A}_0,\mathcal{A}_0+\mathcal{B}_{\fe})=-I(\mathcal{B}_{\fe}^{-1},\mathcal{B}_{\fe}^{-1}+\mathcal{A}^{-1}_0).
\eea
From Proposition \ref{relat index}-$(a)$ in Appendix, we obtain
\bea\label{rela2}
I(\mathcal{A}_0,\mathcal{A}_0+\mathcal{B}_{\fe})=m^{-}(\mathcal{A}_0+\mathcal{B}_{\fe})-m^{-}(\mathcal{A}_0),\ \
I(\mathcal{B}_{\fe}^{-1},\mathcal{B}_{\fe}^{-1}+\mathcal{A}^{-1}_0)
=m^{-}(\mathcal{B}_{\fe}^{-1}+\mathcal{A}^{-1}_0)-m^{-}(\mathcal{B}_{\fe}^{-1}).
\eea
Formulas (\ref{rela1}) and (\ref{rela2}) together with the fact $\mathcal{B}_{\fe}$ is positive definite in $\bar{D}(\omega,2\pi)$, we have
\begin{equation}\label{omega morse index1}
m^{-}(\mathcal{A}_0+\mathcal{B}_{\fe})-m^{-}(\mathcal{A}_0)=-m^{-}(\mathcal{B}_{\fe}^{-1}+\mathcal{A}^{-1}_0).
\end{equation}
Since for $\beta\in(\beta_{2}(0,\omega),1)$,
\begin{eqnarray}
m^{-}(\mathcal{A}_{0})=2,\ \ m^{-}(\mathcal{A}_0+\mathcal{B}_{0})=2,\ \ m^{-}(\mathcal{A}_0+\mathcal{B}_{\fe})=1,\ \ \text{when}\ \ \fe\rightarrow1,\nonumber
\end{eqnarray}
we have
\begin{equation}
m^{-}(\mathcal{B}_{0}^{-1}+\mathcal{A}^{-1}_0)=0,\ \ m^{-}(\mathcal{B}_{\fe}^{-1}+\mathcal{A}^{-1}_0)=1,\ \ \text{when}\ \ \fe\rightarrow1,\nonumber
\end{equation}
Moreover, by Proposition \ref{lem;beta -1 to -3/4}, we know $\beta_{1}(\fe,\omega)<\beta_{1}(0,\omega)$, hence $m^{-}(\mathcal{A}_0+\mathcal{B}_{\fe})\geq1$ for $\beta\in(\beta_{2}(0,\omega),0),$ then (\ref{omega morse index1})
implies $m^{-}(\mathcal{B}_{\fe}^{-1}+\mathcal{A}^{-1}_0)\leq1$.
Therefore, for fixed $\beta\in(\beta_{2}(0,\omega),0)$, when $\fe$ goes from $0$ to $1$, the only eigenvalue of $\mathcal{B}_{\fe}^{-1}+\mathcal{A}^{-1}_0$ can cross the origin is the first eigenvalue $\lambda_{1}(\mathcal{B}_{\fe}^{-1}+\mathcal{A}^{-1}_0)$. We further assert that the first eigenvalue crosses the origin exactly once. The
reason is that the first eigenvalue can be expressed as
$$\begin{aligned}
	\lambda_1(\mathcal{B}_\fe^{-1}+\mathcal{A}_0^{-1})&=\lambda_1(\mathcal{C}+\fe\mathcal{D})=
\inf\frac{\left<(\mathcal{C}+\fe\mathcal{D})x,x\right>}{\left<x,x\right>},
\end{aligned}$$
where $\mathcal{C}=\mathcal{A}_0^{-1}+\frac{1}{|\beta|}$ and $\mathcal{D}=\frac{\cos\theta}{|\beta|}$, one can verify that it is a concave function with respect to $\fe$. Therefore, for fixed $\beta\in(\beta_{2}(0,\omega),0)$, as $\fe$ goes from $0$ to $1$, the first eigenvalue crosses the origin only once.
Since $\mathcal{A}_0+\mathcal{B}_{\fe}$ is non-degenerate iff $\mathcal{B}_{\fe}^{-1}+\mathcal{A}_0^{-1}$ is non-degenerate, it follows that for fixed
$\beta\in(\beta_{2}(0,\omega),0)$, as $\fe$ goes from $0$ to $1$, the degenerate curve
$\Gamma_2(\omega)$ intersects the line $\{(\beta,\fe): \fe\in[0,1)\}$ only once. This proves that $\beta_2(\fe,\omega)$ is increasing with respect to $\fe$.
\end{proof}
\begin{rem}\label{rem1}
We must mention that the above analysis does not work for degenerate curve $\Gamma_{1}(\omega), \omega=e^{2\pi i\nu}, \nu\in(0,1/2)$. The reason is that, after some similar
calculation, in domain $\bar{D}(\omega,2\pi)$, we have
$m^{-}(\mathcal{B}_{0}^{-1}+\mathcal{A}^{-1}_0)=2$, $m^{-}(\mathcal{B}_{\fe}^{-1}+\mathcal{A}^{-1}_0)=1$, for $\fe\rightarrow1^-,$
hence when $\fe$ goes from $0$ to $1$, the eigenvalue of $\mathcal{B}_{\fe}^{-1}+\mathcal{A}^{-1}_0$ passing through the origin is the second eigenvalue, we don't know
the concavity of the second eigenvalue with respect to $\fe$.
\end{rem}

\vskip 0.2 cm

\subsection{The proof of main theorem}\label{proof main thm}
From Proposition \ref{topo of energy surfaces}, we know that the energy surface $\mathfrak{M}(h,\varpi,\beta)$ is a compact and regular contact manifold, which is homeomorphic to a three-sphere $\mathbb{S}^{3}$ when $\beta>-1, -1<2h\varpi^2<0$. The existence of disk-like global surfaces of section has been proved by Theorem \ref{thm: GlobalSurface}.

Taking $\omega=e^{2\pi i\sqrt{1+\beta}}$, by direct computation, we know that $\beta=\beta_{1}(0,\omega)$ for $\beta\in(-1,-3/4)$, whereas $\beta=\beta_{2}(0,\omega)$ for $\beta\in(-3/4,0)$. 
From Proposition \ref{lem;beta -1 to -3/4}, $\beta_{1}(\fe,\omega)$ satisfies
$\beta_{1}(\fe,\omega)<\beta_1(0,\omega)$. By the monotonicity of the mean index with respect to $\beta$ given in Corollary \ref{mean index monotonicity}, we have
$
\hat{i}(\gamma_{\beta_{1}(\fe,\omega),\fe})<\hat{i}(\gamma_{\beta_{1}(0,\omega),\fe}).
$
Moreover, from (\ref{mean index 2}), the mean index is invariant along the degenerate curve $\beta_{1}(\fe,\omega)$, i.e.,
$
\hat{i}(\gamma_{\beta_{1}(\fe,\omega),\fe})=\hat{i}(\gamma_{\beta_{1}(0,\omega),0}).
$
So
$
2\sqrt{1+\beta_{1}(0,\omega)}=\hat{i}(\gamma_{\beta_{1}(0,\omega),0})=\hat{i}(\gamma_{\beta_{1}(\fe,\omega),\fe})<\hat{i}(\gamma_{\beta_{1}(0,\omega),\fe}),
$
which implies
$
2\sqrt{1+\beta}=\hat{i}(\gamma_{\beta,0})<\hat{i}(\gamma_{\beta,\fe})
$
for all $(\beta,\fe)\in(-1,-3/4)\times(0,1)$. 
Proposition \ref{pro;beta -3/4 to 0} implies $\beta_2(\fe,\omega)$ is increasing with respect to $\fe$, so
$\beta_{2}(\fe,\omega)>\beta_2(0,\omega)$. A similar analysis yields
$
2\sqrt{1+\beta}=\hat{i}(\gamma_{\beta,0})>\hat{i}(\gamma_{\beta,\fe})
$
for $(\beta,\fe)\in(-3/4,0)\times(0,1)$. Thus, for $\beta\in(-1,-3/4)\cup(-3/4,0)$ and $\fe\in(0,1)$, we have $2\sqrt{1+\beta}\neq\hat{i}(\gamma_{\beta,\fe})$. Since $\rho_{kep}=\hat{i}(\gamma_{\beta,\fe})/2$, it follows that $\rho_{kep}\neq\sqrt{1+\beta}$.

For $\beta=-3/4$, Proposition \ref{lem;beta -1 to -3/4} and \ref{pro;beta -3/4 to 0} show that $-3/4\in(\beta_{1}(\fe,-1),\beta_{2}(\fe,-1))$
for $\fe\in(0,1)$. From Theorem \ref{prop of degcurve}-$v)$, we have $\gamma_{\beta,\mathfrak{e}}(2\pi)\approx D(\lambda)$, which is hyperbolic, and the mean index remains unchanged. Similarly, for $\beta=0$, Theorem \ref{prop of degcurve}-$iii)$ gives $\gamma_{\beta,\mathfrak{e}}(2\pi)\approx I_{2}$, and the mean index is again invariant.
Therefore, $2\sqrt{1+\beta}=\hat{i}(\gamma_{\beta,0})=\hat{i}(\gamma_{\beta,\fe})$ for $\beta=-3/4$ or $0$, so $\rho_{kep}=\sqrt{1+\beta}\in \mathbb{Q}$. By Corollary \ref{infini peri orbit}, we conclude that there exist infinitely many periodic orbits on each compact and regular energy surface $\mathfrak{M}(h,\varpi,\beta)$ with $\beta\in(-1,0]$. This completes the proof of Theorem \ref{thm; exist infinitely many}.

\begin{appendices}\label{App}
\section{The Maslov-type index for symplectic paths}
We give a brief review of the Maslov-type index for symplectic paths, the relative Morse index and the spectral flow in this section. Details could be found in \cite{Lon02}.
Let $Sp(2n)$ be the set of $2n\times2n$ real symplectic matrices. For $\tau>0$, we consider paths in $Sp(2n)$:
\begin{equation} \label{sym path}
\mathcal{P}_{\tau}(2 n)=\left\{\gamma \in C([0, \tau], \operatorname{Sp}(2 n)) \mid \gamma(0)=I_{2 n}\right\}. \nonumber
\end{equation}
For any $\omega\in\mathbb{U}$(the unit circle in $\mathbb{C}$),
 the following  $\omega$-degenerate hypersurface of codimension one in
 $Sp(2n)$ is defined  \cite{Lon02}:
$$   Sp(2n)_\omega^0=\{M\in Sp(2n) | \det(M-\omega I_{2n})=0 \}.   $$

For $M\in Sp(2n)_\omega^0$, we define a co-orientation of  $Sp(2n)_\omega^0$
at $M$ by the positive direction $\frac{d}{dt}Me^{tJ}|_{t=0}$ of the path $Me^{tJ}$ with $|t|$ sufficiently small. We now give the definition of $\omega$-index \cite{Lon02}.

\begin{defi}\label{def:Maslov-type index} For $\omega\in \mathbb{U}$, $\gamma\in \mathcal{P}_{\tau}$, \textbf{the $\omega$-index} of $\gamma$ is defined as
$$i_\omega(\gamma)= \begin{cases}[e^{-\epsilon J}\gamma: Sp(2n)_\omega^0]-n, & \text { if } \omega=1 \\ [e^{-\epsilon J}\gamma: Sp(2n)_\omega^0], & \text { if } \omega\neq1,\end{cases}
 $$ where $\epsilon>0$ is small enough and $[\cdot : \cdot]$ stands for the intersection number. We also denote the mean index and the nullity of $\gamma$ by
 $$
 \hat{i}(\gamma)=\lim_{t\rightarrow+\infty}\frac{i_{1}(\gamma)|_{[0,t]}}{t}\tau,\ \ \nu_\omega(\gamma)=\dim_{\mathbb{C}}\ker_{\mathbb{C}}(\gamma(\tau)-\omega I).\ \
 $$
\end{defi}
Let $\Lambda_D:=\mathbb{R}\oplus\{0\}$ be a Lagrangian subspace of $\rr^2$. Assume that the path $\gamma\in \mathcal P_\tau(2n)$ is differentiable and satisfies $-J_{2n}\dot \gamma(t)\gamma(t)^{-1}|_{\Lambda_D}>0$ for every $t\in \mathbb R$. As in the Morse index theorem,
the mean index can be computed by
\begin{eqnarray}\label{mean index2}
\hat{i}(\gamma)=\lim_{k\to +\infty}\frac{1}{k}\sum_{0<t\leq k\tau} \operatorname{dim} \gamma(t) \Lambda_{D} \cap \Lambda_{D}.
\end{eqnarray}

 In \cite{Lon02}, Long provided an explicit formula for the mean index. First, every
 $M\in \Sp(2)$ is symplectically similar to one of the following normal forms:
\begin{equation}\label{definition of R(theta)}
D(\lambda)=\left(\begin{array}{cc}
\lambda & 0 \\
0 & \lambda^{-1}
\end{array}\right),\ \
N_{1}(\lambda, a)=\left(\begin{array}{cc}
\lambda & a \\
0 & \lambda
\end{array}\right), \ \
R(\vartheta)=\left(\begin{array}{cc}
\cos \vartheta & -\sin \vartheta \\
\sin \vartheta & \cos \vartheta
\end{array}\right),\nonumber
\end{equation}
where $\lambda\in \mathbb{R}\setminus\{0\}$, $a=\pm1,0$, and $\vartheta\in(0,\pi)\cup(\pi,2\pi)$. As a special case
 of Corollary $8.3.2$ in \cite{Lon02}, we have
\begin{equation} \label{mean index 2}
\hat{i}(\gamma)=\left\{\begin{array}{ll}
i_1(\gamma)-1+\vartheta/\pi, & \text {if}\ \ \gamma(\tau)\approx R(\vartheta), \vartheta \in(0, \pi) \cup(\pi, 2 \pi)  \\
i_1(\gamma)+1, & \text{if}\ \ \gamma(\tau)\approx N_1(1,a), a=1,0 \\ i_1(\gamma), & \text{other}\quad \text{cases.}
\end{array}\right.
\end{equation}

The Maslov-type index is essentially equal to the Morse index.
 Consider a second order system $\ddot{x}=D(t)x$, $t\in[0,\tau]$. Let $\mathcal{A}=-\frac{d^2}{dt^2}+D$, which is a self-adjoint operator on $L^2([0,\tau], \mathbb{C}^n)$ with domain
 $\bar{D}(\omega,\tau)=\{W^{2,2}([0,\tau], \mathbb{C}^n)|y(\tau)=\omega y(0), \dot{y}(\tau)=\omega \dot{y}(0)\}.
 $
 we define the nullity of $\mathcal{A}$ by $v_{\omega}(\mathcal{A}) =\dim\ker(\mathcal{A})$ and
denote by $\phi_\omega(\mathcal{A})$ its Morse index, then from \cite{Lon02}, we have
\begin{equation}\label{index equ}
\phi_{\omega}(\mathcal{A})=i_{\omega}(\gamma), \ \ v_{\omega}(\mathcal{A})=\nu_{\omega}(\gamma),
\end{equation}
where $\dot{\gamma}=J \left(\begin{array}{cc}
I_n & 0_n \\
0_n & -D(t)
\end{array}\right)\gamma, \gamma(0)=I.$ For general boundary conditions, we refer to \cite{HWY}.
\section{The relative Morse index and spectral flow}
In general, a relative Morse index for two
self-adjoint Fredholm operators $A$ and $F$ on Hilbert Space $\mathcal{H}$ is introduced via the spectral flow,
$
I(A,A+F)=-sf\{A_{s}\},
$
where $A_s$ is a path of self-adjoint Fredholm operator such that $A_{0}=A, A_{1}=A+F$, and $sf(A_{s})$ denotes the spectral flow of a path of self-adjoint Fredholm operators $\{A_s, s\in[0,1]\}$. We usually take $A_{s}=A+sF$. The spectral flow was first introduced by Atiyah, Patodi and Singer \cite{APS76} in their study of index theory on manifolds with boundary. Roughly speaking, the spectral flow of a path $\{A_{s},s\in [0,1]\}$ counts the net change in the number of negative eigenvalues of $A_{s}$ as $s$ goes from $0$ to $1$, where the enumeration follows from the rule that each negative eigenvalue crossing to the positive axis contributes $+1$ and each positive eigenvalue crossing to the negative axis contributes $-1$, and for each crossing the multiplicity of eigenvalue is counted. In the following, we list some useful properties of relative Morse index. More details can be found in \cite{LZ99}.
\begin{prop}\label{relat index}

(a) If the Morse index  $m^{-}(A+F), m^{-}(A)<+\infty$, we have
$$
I(A,A+F)=m^{-}(A+F)-m^{-}(A).
$$

(b) When $A$ is a closed self-adjoint invertible operator with
 compact resolvent and $F$ is bounded definite self-adjoint operator on $\mathcal{H}$, then
 $$
 I(A, A+F)=I(-F^{-1}, -F^{-1}-A^{-1}).
 $$

 (c) Under the condition in (b), if $A^{-1}+F^{-1}$ is non-degenerate on $\mathcal{H}$, then
 $$
 I(A, A+F)=-I(F^{-1}, F^{-1}+A^{-1}).
 $$
\end{prop}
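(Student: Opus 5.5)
We prove the three items in order, working throughout with the definition $I(A,A+F)=-sf\{A+sF,\ s\in[0,1]\}$. Two standard facts about spectral flow will be used: it is invariant under homotopies with fixed non-degenerate endpoints (more generally, it is additive under concatenation), and at a regular crossing it equals the signature of the crossing form.

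\textbf{Item (a).} If $m^-(A)$ and $m^-(A+F)$ are finite, then along the path $A_s=A+sF$ only finitely many eigenvalues can cross $0$. By the counting rule, the net number of eigenvalues moving from negative to positive is $m^-(A)-m^-(A+F)$. Hence $sf\{A_s\}=m^-(A)-m^-(A+F)$, and therefore $I(A,A+F)=m^-(A+F)-m^-(A)$.

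A kernel at an endpoint requires a convention. We use the convention of \cite{LZ99}: the endpoints are perturbed by $-\epsilon$, which is consistent with counting only strictly negative eigenvalues in $m^-$. Under this convention no extra term appears.

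\textbf{Item (b).} The key algebraic identity is that, for $s>0$,
$$A+sF=sF\bigl((sF)^{-1}+A^{-1}\bigr)A.$$
Consequently $x\in\ker(A+sF)$ if and only if $y=Ax$ lies in $\ker\bigl(-s^{-1}F^{-1}-A^{-1}\bigr)$, with equal dimensions. Next we compare crossing forms.

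\begin{itemize}
\item For the path $s\mapsto A+sF$, the crossing form is $\langle Fx,x\rangle$.
\item For the path $s\mapsto C_s:=-s^{-1}F^{-1}-A^{-1}$, the derivative is $s^{-2}F^{-1}$, so the crossing form is $s^{-2}\langle F^{-1}y,y\rangle$.
\item Using $y=Ax=-sFx$ on the kernel, the second form becomes $\langle Fx,x\rangle$, which is the first form.
\end{itemize}

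Since $F$ is definite, all crossings are regular and of one sign. Therefore the two paths have the same spectral flow on $[\varepsilon,1]$ for every $\varepsilon>0$.

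Now choose $\varepsilon$ small. Since $A^{-1}$ is bounded and $F^{-1}$ is definite, the straight segment $-\varepsilon^{-1}F^{-1}-tA^{-1}$, $t\in[0,1]$, consists of invertible operators. The same holds for the segment $A+sF$, $s\in[0,\varepsilon]$, because $A$ is invertible and $\varepsilon\|F\|$ is small. So neither short segment contributes spectral flow. The rescaling $-\sigma F^{-1}$, $\sigma\in[1,\varepsilon^{-1}]$, is also invertible throughout.

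Close up the rectangle of paths in the plane of parameters $(\sigma,t)\mapsto -\sigma F^{-1}-tA^{-1}$. Homotopy invariance then shows that the spectral flow of $C_s$ over $s\in[\varepsilon,1]$ equals the spectral flow of $-F^{-1}-tA^{-1}$ over $t\in[0,1]$. This yields $I(A,A+F)=I(-F^{-1},-F^{-1}-A^{-1})$.

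\textbf{Item (c).} Negating a path of self-adjoint Fredholm operators with non-degenerate endpoints reverses every crossing, so $sf\{-B_t\}=-sf\{B_t\}$. The endpoint $F^{-1}$ is invertible. The endpoint $F^{-1}+A^{-1}$ is invertible exactly by the assumed non-degeneracy, and this is why that hypothesis is needed. Applying the reversal to $B_t=F^{-1}+tA^{-1}$ and combining with (b) gives $I(A,A+F)=-I(F^{-1},F^{-1}+A^{-1})$.

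\textbf{Main obstacle.} The delicate step is the endpoint bookkeeping in (b). One must match the crossing-form signs exactly, including the sign of $y=-sFx$, and justify the homotopy at $s\to0$, where $-s^{-1}F^{-1}$ blows up. The plan is to handle this, as above, by truncating at a small $\varepsilon$ and rescaling by the invertible family $-\sigma F^{-1}$.
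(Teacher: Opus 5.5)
The paper gives no proof of this proposition. It is listed in the appendix as a known property of the relative Morse index, with a pointer to \cite{LZ99}, so there is no in-text argument to compare against. Your proposal is a self-contained proof, and it is correct.

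In (b), the factorization $A+sF=sF\bigl((sF)^{-1}+A^{-1}\bigr)A$ gives the kernel isomorphism $x\mapsto Ax$ from $\ker(A+sF)$ onto $\ker C_s$. The derivatives $F$ and $s^{-2}F^{-1}$ have the same definite sign, so both paths are monotone. Hence on $[\varepsilon,1]$ each spectral flow is $\pm$ the total kernel dimension encountered, with the same sign. Your crossing-form computation confirms this, and it also shows that a possibly degenerate endpoint $s=1$ is treated identically in both paths.

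The truncation at $\varepsilon$ and the rectangle $(\sigma,t)\mapsto-\sigma F^{-1}-tA^{-1}$ handle the blow-up at $s=0$ correctly:
\begin{itemize}
\item every operator in the rectangle is Fredholm, being invertible plus compact since $A$ has compact resolvent;
\item the bottom edge is invertible;
\item the right edge is invertible once $\varepsilon<(\|F\|\,\|A^{-1}\|)^{-1}$, because a bounded, invertible, definite $F$ satisfies $|\langle F^{-1}y,y\rangle|\ge\|F\|^{-1}\|y\|^2$.
\end{itemize}
Part (c) then follows from $sf\{-B_t\}=-sf\{B_t\}$ for paths with invertible endpoints. You correctly identify this as the only place where the non-degeneracy of $A^{-1}+F^{-1}$ enters.

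Two places should be tightened.

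\textbf{Finiteness along the path in (a).} Finiteness of $m^-$ at the two endpoints is not by itself why the crossing count telescopes; you need $m^-(A+sF)$ finite for every $s\in[0,1]$. This holds because $\langle (A+sF)x,x\rangle=(1-s)\langle Ax,x\rangle+s\langle (A+F)x,x\rangle$ is nonnegative on the orthogonal complement of the two finite-dimensional negative eigenspaces. That complement has finite codimension, so $m^-(A+sF)\le m^-(A)+m^-(A+F)$.

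\textbf{The endpoint convention.} State it precisely: crossings are counted across the level $-\epsilon$, equivalently one takes the spectral flow of $A_s+\epsilon$. This places kernel vectors at the endpoints on the nonnegative side, consistent with the strict count in $m^-$. If you literally replace the operators by $A_s-\epsilon$, as ``perturbed by $-\epsilon$'' suggests, nullities enter. Then (a) would read $(m^-+\nu)(A+F)-(m^-+\nu)(A)$ instead.
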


\end{appendices}

\hfill\newline
\noindent{\bf Acknowledgement.}
 This work is partially supported by the National Key R\&D Program of China (2020YFA0713303). X.Hu and Z.Qiao are partially supported by the National Key R\&D Program of China NSFC(No. 12521001) and Taishan Scholars Climbing Program of Shandong(TSPD20240802). Y.Ou is partially supported by NSFC ($\sharp$12371192), the Young Taishan Scholars Program of Shandong Province ($\sharp$ tsqn202312055), and the Qilu Young Scholar Program of Shandong University.
 
 The authors are deeply grateful to Professor Lei Liu for his valuable discussion. They are also deeply grateful to the anonymous referees for their careful reading of the manuscript and insightful comments and suggestions.

\hfill\newline
\bibliographystyle{abbrv}
\bibliography{RefaniKep}

	\end{document}